\newtheorem{theorem}{Theorem}[section]
\newtheorem{lemma}[theorem]{Lemma}
\newtheorem{proposition}[theorem]{Proposition}
\newtheorem{corollary}[theorem]{Corollary}
\newcommand{\lap}{\Delta}
\newcommand{\R}{\mathbb{R}}
\newcommand{\Hi}{\mathbb{H}}
\newcommand{\grad}{\operatorname*{grad}}
\begin{document}

\pagestyle{myheadings}

\markboth{LEONARDO BONORINO AND PATR\'{I}CIA KLASER}{EXISTENCE AND NONEXISTENCE OF EIGENFUNCTIONS}

\title{Existence and nonexistence results for eigenfunctions of the Laplacian in unbounded domains of $\mathbb{H}^n$}
\author{Leonardo Prange Bonorino
\and Patr\'{\i}cia Kruse Klaser  }
\date{}
\maketitle

\begin{abstract}
We investigate, for the Laplacian operator, the existence and nonexistence of eigenfunctions of eigenvalue between zero and the first eigenvalue of the hyperbolic space $\mathbb{H}^n$, for unbounded domains of $\mathbb{H}^n$. If a domain $\Omega$ is contained in a horoball, we prove that there is no positive bounded eigenfunction that vanishes on the boundary.  However, if the asymptotic boundary of a domain contains an open set of $\partial_{\infty}\mathbb{H}^n$, there is a solution that converges to $0$ at infinity and can be extended continuously to the asymptotic boundary. In particular, this result holds for hyperballs.  \end{abstract}

\noindent {\bf Mathematics Subject Classification (2010):} 58J50 (primary); 58J05, 58J32 (secondary)

\section{Introduction}
The first eigenvalue of a noncompact Riemannian manifold $M$ is defined by
$$\lambda_1(M)=\inf\{\lambda_1({\mathcal O})\; \big{|} \; {\mathcal O} \text{ is a bounded domain of }M\},$$
where $\lambda_1({\mathcal O})$ is the first eigenvalue of the Laplacian on ${\mathcal O}$. 
We say that $\lambda \in \mathbb{R}$ is an eigenvalue of the Laplacian in $\Omega\subset M$ if there is some nontrivial function $u \in C(\overline{\Omega}) \cap C^2(\Omega)$ such that 
\begin{equation}\label{eqProblemaPrincipal}
\left\{
\begin{array}{rclc}
- \Delta u & = & \lambda \, u & {\rm in } \; \;\Omega \\[5pt]
  u & = & 0 & {\rm on } \; \; \partial \Omega. \\ \end{array}
\right. 
\end{equation}
In this case, we say that $u$ is an eigenfunction associated to $\lambda$ in $\Omega$. We do not require that $u$ is bounded nor converges to $0$ at infinity.

In the special case $M=\mathbb{H}^n$, McKean proved in \cite{Mk} that $$\lambda_1:=\lambda_1(\Hi^n)=\frac{(n-1)^2}{4}.$$

In this work we study existence and nonexistence of eigenfunctions in subsets of the hyperbolic spaces $\mathbb{H}^n$ associated to $\lambda\in[0,\lambda_1]$, where $\lambda_1$ is the first eigenvalue of $\Hi^n.$
Moreover we investigate when such eigenfunctions converges to zero at infinity. 
 
It is interesting to observe that for the Euclidean space, $\lambda_1(\mathbb{R}^n)=0$ and the equivalent problem consists in finding harmonic functions that vanish on the boundary. Observe that for noncompact domains in $\mathbb{R}^n$, any bounded harmonic function that vanishes on the boundary and at infinity is trivial. For $\mathbb{H}^n$, if $\lambda=\lambda_1(\mathbb{H}^n)$, there exist nonconstant bounded eigenfunctions of eigenvalue $\lambda$, defined in the whole $\mathbb{H}^n$, that vanish at infinity. Indeed, Grellier and Otal \cite{GO} gave an integral expression  to the radial eigenfunctions associated to any $\lambda \in (0,\lambda_1]$, for $n \geq 3$. For $n=2$, a similar characterization can be found in \cite{O}. We notice that these eigenfunctions are decreasing and bounded above by $Cre^{-\sqrt{\lambda}r}.$

It is natural to wonder what domains in $\Hi^n$ admit eigenfunctions associated to $\lambda\in[0,\lambda_1]$, that not necessarily converge at infinity. 
Another point of interest is to determine whether an eigenfunction in $\Omega \subset \mathbb{H}^n$ can be extended continuously to the asymptotic boundary of $\Omega,$ as the zero function. We consider $\overline{\mathbb{H}^n}=\mathbb{H}^n\cup \partial_\infty\mathbb{H}^n$, where $\partial_\infty\mathbb{H}^n$ is the asymptotic boundary of $\mathbb{H}^n$ with the cone topology (see \cite{EO}). The asymptotic boundary of a subset $A$ is defined by $\partial_{\infty}A=\bar{A} \cap \partial_{\infty}\mathbb{H}^n$, where $\bar{A}$ is the closure of $A$ in $\overline{\mathbb{H}^n}$.

The study of these two questions is the main purpose of this work. The answer to the first one is related to the question ``how large is the asymptotic boundary of this domain?''.
We prove in Section \ref{secaoHorobolas} that for any domain $\Omega$ contained in a horoball, it does not exist a bounded eigenfunction in $\Omega$ associated to $\lambda\in [0,\lambda_1]$.  This nonexistence result includes the harmonic case. In Section \ref{generalBigDomains}, we show the existence of a positive bounded eigenfunction associated to $\lambda \in (0,\lambda_1]$ in $\Omega$ if $\partial_\infty \Omega$ has nonempty interior in $\partial_{\infty}\Hi^n.$ 

Concerning to the second question, we present in Section \ref{generalBigDomains} a positive bounded eigenfunction defined outside a horoball that cannot be extended continuously to the asymptotic boundary. On the other hand, we obtain in this section eigenfunctions that have a continuous extension to the asymptotic boundary, where it vanishes. We prove also that if an eigenfunction converges as $x \to \infty$, then the limit must be zero.

These results lead us to the conclusion that the topology of the asymptotic boundary of the domain is essential for the existence and nonexistence of bounded eigenfunctions. In fact, the open subsets of $\overline{\Hi^n}$ that intercept  $\partial \Hi^n$ admit a bounded eigenfunction that vanishes at infinity, but the horoball (or any subset of it), that has at most one point of $\partial_\infty\Hi^n$, does not admit solution.
Also we must point out that some domains, like the horoball, do not admit bounded eigenfunctions but they admit unbounded ones. 

This work is organized as follows. In Section \ref{preliminaries}, we review some comparison principles. Some useful results of \cite{GO}, about radially symmetric eigenfunctions, are studied in Section \ref{secaoHorobolas}, where we also show some properties of radial eigenfunctions defined outside a ball. In Section \ref{secaoHorobolas} and \ref{generalBigDomains}, we prove the main results regarding existence and nonexistence of eigenfunctions in unbounded domains.

\

\noindent {Acknowledgments.}
We would like to thank Prof. Dr. Jaime Bruck Ripoll by his support and useful suggestions to this research work.

\ 

\section{Preliminaries}\label{preliminaries}

We present two comparison lemmas about eigenfunctions in Riemannian manifolds, that will be needed in the next sections.

\begin{lemma}\label{comparison}
(Comparison Principle I) Let $\Omega$ be a bounded domain and $u, v: \bar{\Omega} \rightarrow \R$ be nonnegative solutions of
$$-\lap w=\lambda w,$$
where $\lambda \ge 0$.
If $u\geq v>0$ on $\partial \Omega,$ then $u\geq v$ in $\Omega.$
\end{lemma}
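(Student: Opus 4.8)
The plan is to reduce the comparison to a single application of the maximum principle for a divergence-form operator with \emph{no} zeroth-order term, obtained through the quotient $w=u/v$. The clean cancellation of the $\lambda$-terms in the equation is exactly what makes this substitution work, and it is the heart of the argument.

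First I would upgrade the positivity of $v$ from the boundary to all of $\bar\Omega$, since the hypothesis only assumes $v\ge 0$ in the interior. Because $v\ge 0$ and $\lambda\ge 0$, the equation gives $\lap v=-\lambda v\le 0$, so $v$ is superharmonic on the connected set $\Omega$ and obeys the minimum principle. If $v$ vanished at some interior point, that point would be a global interior minimum (as $v\ge 0$), forcing $v\equiv 0$ by the strong minimum principle and contradicting $v>0$ on $\partial\Omega$. Hence $v>0$ in $\Omega$, and together with the hypothesis on $\partial\Omega$ we conclude $v>0$ on the compact set $\bar\Omega$. I expect this to be the only genuine subtlety in the proof.

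With $v$ bounded away from $0$ on $\bar\Omega$, the function $w=u/v$ is well defined, of class $C^2$ in $\Omega$ and continuous on $\bar\Omega$. Writing $u=wv$ and expanding $\lap u=v\lap w+2\langle\nabla w,\nabla v\rangle+w\lap v$, then substituting $\lap u=-\lambda u$ and $\lap v=-\lambda v$, the two $-\lambda wv$ terms cancel and one is left with $v\lap w+2\langle\nabla w,\nabla v\rangle=0$. Multiplying by $v$ recasts this in divergence form as $\operatorname{div}(v^2\nabla w)=0$. Since $v\in C^2$ and $v>0$ on $\bar\Omega$, this is a uniformly elliptic linear equation with smooth coefficients and no zeroth-order term, so $w$ satisfies the maximum (and minimum) principle.

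Finally I would invoke the minimum principle for $w$: its minimum over $\bar\Omega$ is attained on $\partial\Omega$, where $w=u/v\ge 1$ by the hypothesis $u\ge v>0$. Therefore $w\ge 1$ throughout $\Omega$, that is $u\ge v$ in $\Omega$, which is the claim.
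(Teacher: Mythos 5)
Your proof is correct, but it takes a genuinely different route from the paper. The paper argues by a sliding-constant trick: assuming $v(x_0)>u(x_0)$ for some interior point, it sets $C=\sup\{c\ge 0 \,:\, u\ge cv \text{ in }\Omega\}<1$, observes that $u-Cv\ge 0$ is superharmonic (this is where $\lambda\ge 0$ enters, playing exactly the role of the $\lambda$-cancellation in your quotient), shows its minimum equals zero and cannot lie on $\partial\Omega$ because $u-Cv> u-v\ge 0$ there, and so contradicts the minimum principle for superharmonic functions. Your ground-state-type substitution $w=u/v$, leading to $\operatorname{div}(v^2\nabla w)=0$, packages the same cancellation once and for all into an equation with no zeroth-order term; it buys a cleaner conclusion ($w\ge 1$ in one application of the minimum principle) and generalizes readily (e.g., to $\lambda$ replaced by a potential), at the cost of first having to prove $v>0$ throughout $\Omega$ --- a step you correctly identified and carried out, and which the paper's proof sidesteps entirely since it never divides by $v$.

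One caveat, minor and fixable: your appeal to a global maximum principle for ``a uniformly elliptic equation with smooth coefficients'' is not justified as stated, because in this paper eigenfunctions are only $C^2(\Omega)\cap C(\bar{\Omega})$, so $\nabla v$, and hence the drift $2\nabla v/v$, need not be bounded up to $\partial\Omega$ (ruling out the textbook non-divergence weak maximum principle, which needs a global bound on the drift), while $w$ need not belong to $W^{1,2}(\Omega)$ as required by the divergence-form weak maximum principle. The repair is one line and purely local: $w$ is continuous on the compact set $\bar{\Omega}$, so its minimum is attained; if it were attained at an interior point with value below $\min_{\partial\Omega}w\ge 1$, the strong minimum principle for $v\,\Delta w + 2\langle\nabla v,\nabla w\rangle=0$ --- whose coefficients are locally bounded in $\Omega$, which is all Hopf's theorem needs --- would force $w$ to be constant on the connected set $\Omega$, contradicting its boundary values by continuity. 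This is the same pattern you already used to establish $v>0$ in $\Omega$, so your argument closes with no new ideas required.
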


\begin{proof}
Suppose that there is $x_0\in \Omega$ such that $v(x_0)>u(x_0).$
Define $$C=\sup\{c\geq 0 \; | \; u\geq c\, v \; {\rm in } \; \Omega\}.$$
It is clear that $C<1.$ Hence $u\geq v > Cv$ on $\partial \Omega.$
From the definition of $C,$ $u-Cv\geq 0$ in $\Omega.$

We claim that $u-Cv$ has an interior minimum. Observe that the minimum of $u-Cv$ is zero and it cannot happen in $\partial\Omega$ because $u-Cv > u-v \geq 0$ on $\partial \Omega.$

On the other hand, $u-Cv$ is superharmonic because
$$-\lap(u-Cv)=\lambda (u-Cv)\geq 0$$
Hence it cannot have a interior minimum and we have a contradiction.
\end{proof}

\begin{lemma}\label{comparison II}
(Comparison Principle II) Let $\Omega$ be a bounded domain, $u \in C(\overline{\Omega})\cap C^2(\Omega)$ be a supersolution, and $v\in C(\overline{\Omega})\cap C^2(\Omega)$ be a subsolution of 
$$-\lap w=\lambda w,$$
where $\lambda<\lambda_1(\Omega).$
If $u > v$ on $\partial \Omega,$ then $u > v$ in $\Omega.$
\end{lemma}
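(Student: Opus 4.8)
The plan is to reduce everything to the single function $w = u - v$ and then contradict the variational characterization of $\lambda_1(\Omega)$. First I would record that, from the definitions of supersolution and subsolution, $-\lap u \ge \lambda u$ and $-\lap v \le \lambda v$ in $\Omega$, so that $w = u - v$ satisfies $-\lap w \ge \lambda w$ in $\Omega$, while $w > 0$ on $\partial\Omega$. The goal then becomes: such a $w$ must be strictly positive throughout $\Omega$.

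Suppose not, and argue by contradiction. Since $w \in C(\overline{\Omega})$ and $w > 0$ on the compact set $\partial\Omega$, continuity gives $w > 0$ on a neighborhood of $\partial\Omega$; hence the negativity set $\Omega^- = \{x \in \Omega : w(x) < 0\}$ is a nonempty open set whose closure is contained in $\Omega$. I would then introduce the test function $\psi = \max(-w,0) = (v-u)^+$, which vanishes outside $\Omega^-$, is continuous, nonnegative, not identically zero, and has compact support in $\Omega$; thus $\psi \in H^1_0(\Omega)$, with $\psi = -w$ and $\nabla\psi = -\nabla w$ on $\Omega^-$ and $\nabla\psi = 0$ elsewhere.

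The heart of the argument is an integration by parts. Multiplying $-\lap w \ge \lambda w$ by $\psi \ge 0$ and integrating over $\Omega^-$ (where $w = -\psi$), the boundary term vanishes because $\psi = 0$ on $\partial\Omega^-$, and one obtains
\begin{equation*}
\int_{\Omega}|\nabla\psi|^2 = \int_{\Omega^-}|\nabla w|^2 \le \lambda\int_{\Omega^-}\psi^2 = \lambda\int_{\Omega}\psi^2.
\end{equation*}
Consequently the Rayleigh quotient of $\psi$ satisfies $\int_{\Omega}|\nabla\psi|^2 / \int_{\Omega}\psi^2 \le \lambda$, which together with the variational characterization $\lambda_1(\Omega) = \inf\{\int_{\Omega}|\nabla\phi|^2 / \int_{\Omega}\phi^2 : \phi \in H^1_0(\Omega)\setminus\{0\}\}$ forces $\lambda_1(\Omega) \le \lambda$, contradicting the hypothesis $\lambda < \lambda_1(\Omega)$.

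The step I expect to require the most care is making the integration by parts rigorous under the stated regularity $u,v \in C(\overline{\Omega})\cap C^2(\Omega)$, since $\partial\Omega^-$ is merely a level set of $w$ and need not be smooth. I would handle this by replacing $\Omega^-$ with the sublevel sets $\{w < -\varepsilon\}$, which are compactly contained in $\Omega$ and, by Sard's theorem, have smooth boundary for almost every small $\varepsilon > 0$; performing the computation there and letting $\varepsilon \to 0$ recovers the displayed inequality. This also confirms that $\psi \in H^1_0(\Omega)$, so that the Rayleigh quotient comparison is legitimate.
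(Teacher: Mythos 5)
Your variational core is the same as the paper's: the paper also considers the set $\Omega' = \{v > u\}$, tests the differential inequality against $w_0 = v-u \in H_0^1(\Omega')$, and contradicts $\lambda < \lambda_1(\Omega) \le \lambda_1(\Omega')$ via the Rayleigh quotient. But there is a genuine gap in your argument: the negation of ``$w > 0$ throughout $\Omega$'' is that $w(x_0) \le 0$ at some interior point, and from this it does \emph{not} follow that your negativity set $\Omega^- = \{w < 0\}$ is nonempty --- $w$ could be nonnegative everywhere and vanish at an interior point. Your Rayleigh-quotient contradiction only rules out $\Omega^- \ne \emptyset$, so it proves $u \ge v$ in $\Omega$, not the strict inequality $u > v$ asserted by the lemma (and the strictness is actually used later in the paper, e.g.\ to get $\bar w > |u|$ on the closure in Lemma \ref{lemma do C0 e d0}). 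The paper closes this case with a second step you are missing: once $v \le u$, the function $w = u - v \ge 0$ satisfies $-\Delta w \ge \lambda w \ge 0$ (for $\lambda \ge 0$; for $\lambda < 0$ the strong maximum principle for $-\Delta - \lambda$ with nonnegative zero-order coefficient applies directly), so $w$ is superharmonic, and an interior minimum equal to $0$ would force $w \equiv 0$, contradicting $w > 0$ on $\partial\Omega$. Adding this strong-maximum-principle step repairs the proof.

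A secondary technical point: your proposed fix via Sard's theorem is not justified at the stated regularity, since Sard for a real-valued function on an $n$-dimensional manifold requires $C^n$ smoothness and $w$ is only $C^2$, so for $n \ge 3$ almost-every sublevel set $\{w < -\varepsilon\}$ need not have smooth boundary. The detour is unnecessary: since $\psi = (v-u)^+$ is Lipschitz with compact support in $\Omega$ and $w \in C^2(\Omega)$, you may integrate $(-\Delta w)\psi \ge \lambda w \psi$ over all of $\Omega$ and integrate by parts there (no boundary term, as $\psi$ has compact support), using that $\nabla \psi = -\nabla w$ a.e.\ on $\{w<0\}$ and $\nabla\psi = 0$ a.e.\ elsewhere; this yields
$$\int_{\Omega} |\nabla \psi|^2 \, dx \le \lambda \int_{\Omega} \psi^2 \, dx$$
directly, with no regularity assumption on $\partial\Omega^-$. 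This is in effect what the paper's appeal to the divergence theorem on $\Omega'$ amounts to.
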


\begin{proof}
Suppose that $\Omega':=\{x\in\Omega\;|\; v(x)>u(x)\}$ is not empty. Then $w_0:=v-u$ is a positive function in $\Omega'$ that satisfies $-\Delta w_0 \le \lambda w_0$, vanishes on $\partial \Omega' \subset \Omega$, and belongs to $H_0^1(\Omega')$.  Multiplying this inequality by $w_0$ and using the divergence theorem, we get
$$ Q(w_0):= \frac{\displaystyle \int_{\Omega'} |\nabla w_0|^2 dx }{\displaystyle \int_{\Omega'} |w_0|^2 dx } \le \lambda $$
But this contradicts $Q(w_0) \ge \inf_{w\in H_0^1(\Omega')} Q(w)= \lambda_1(\Omega')\geq\lambda_1(\Omega)>\lambda$. Hence $v \le u$ and, therefore, $-\Delta v \le -\Delta u$ in $\Omega$.
This, the Strong Maximum Principle and $u > v$ on $\partial \Omega$ imply that $u > v$ in $\Omega$.  
\end{proof}

\begin{corollary}
If $\lambda \le \lambda_1$ and $\Omega$ is a bounded domain, then there is no eigenfunction associated to $\lambda$ in $\Omega$.
\label{NoEigenfunctionInBoundedDomains}
\end{corollary}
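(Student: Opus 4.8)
The plan is to reduce the statement to the strict inequality $\lambda < \lambda_1(\Omega)$ and then run the same Rayleigh-quotient computation that already appears in the proof of Comparison Principle II. The crucial observation is that, although the definition of $\lambda_1 = \lambda_1(\Hi^n)$ as an infimum over bounded domains only gives $\lambda_1(\Omega) \ge \lambda_1$ for every bounded domain $\Omega$, this inequality is in fact strict whenever $\Omega$ is bounded.

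To establish the strict inequality I would first fix a bounded domain $\Omega' \subset \Hi^n$ with $\overline{\Omega} \subset \Omega'$; since $\overline{\Omega}$ is compact, such an $\Omega'$ exists, for instance a slightly enlarged geodesic ball containing $\overline{\Omega}$. Strict domain monotonicity of the first Dirichlet eigenvalue under proper inclusion then yields $\lambda_1(\Omega) > \lambda_1(\Omega')$, while the definition of $\lambda_1$ gives $\lambda_1(\Omega') \ge \lambda_1$. Combining these, $\lambda_1(\Omega) > \lambda_1 \ge \lambda$, so that $\lambda < \lambda_1(\Omega)$.

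With this in hand, suppose toward a contradiction that a nontrivial eigenfunction $u$ associated to $\lambda$ exists. Since $u \in C(\overline{\Omega}) \cap C^2(\Omega)$ vanishes on $\partial\Omega$ and $\Omega$ is bounded, we have $u \in H_0^1(\Omega)$. Multiplying $-\lap u = \lambda u$ by $u$ and integrating by parts over $\Omega$ gives $\int_\Omega |\nabla u|^2\,dx = \lambda \int_\Omega u^2\,dx$, so that the Rayleigh quotient of $u$ equals $\lambda$. But the variational characterization forces $\lambda = Q(u) \ge \inf_{w \in H_0^1(\Omega)} Q(w) = \lambda_1(\Omega)$, contradicting $\lambda < \lambda_1(\Omega)$. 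Hence no such $u$ exists.

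The main obstacle is precisely the upgrade from $\lambda_1(\Omega) \ge \lambda_1$ to the strict $\lambda_1(\Omega) > \lambda_1$: this is the point that prevents the infimum defining $\lambda_1$ from being attained on any bounded domain, and it rests on strict monotonicity of $\lambda_1(\cdot)$ under proper domain inclusion. A secondary, routine technical point is justifying $u \in H_0^1(\Omega)$ and the integration by parts from the stated regularity $u \in C(\overline{\Omega}) \cap C^2(\Omega)$ on a bounded domain; verifying this is what allows the argument from Comparison Principle II to be reused essentially verbatim.
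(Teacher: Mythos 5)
Your proof is correct, but it takes a different route from the paper's. The paper does not re-run the Rayleigh-quotient computation at all: it takes an open geodesic ball $B \supset \overline{\Omega}$, notes $\lambda_B := \lambda_1(B) > \lambda_1 \ge \lambda$ (citing McKean for the strict inequality), and uses the positive first eigenfunction $u$ of $B$ as a barrier: for every $\alpha > 0$, $\alpha u$ is a supersolution of $-\lap w = \lambda w$ with $\alpha u > 0 = v$ on $\partial\Omega$, so Comparison Principle II gives $v \le \alpha u$; letting $\alpha \downarrow 0$ yields $v \le 0$, and the same argument for $-v$ gives $v \equiv 0$. You instead prove $\lambda < \lambda_1(\Omega)$ via strict domain monotonicity and then test the equation against the eigenfunction itself. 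Both routes hinge on the same strictness $\lambda_1(\Omega) > \lambda_1$, and both gloss the same technical point (the paper asserts $w_0 \in H_0^1(\Omega')$ inside Lemma \ref{comparison II} with no more justification than you give for $u \in H_0^1(\Omega)$), so you are at the paper's own level of rigor there. Two cautions on your version: first, ``strict monotonicity under proper inclusion'' is false as a general principle (removing a set of zero capacity, e.g.\ a point, leaves $\lambda_1$ unchanged); it is true in your setting only because $\Omega' \setminus \overline{\Omega}$ contains an open set, and the standard justification -- extend the first eigenfunction of $\Omega$ by zero, note that equality of eigenvalues would make it a first eigenfunction of $\Omega'$ vanishing on an open set, contradicting positivity/unique continuation -- deserves to be spelled out, since it is the entire content of your first step. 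Second, your argument needs the first eigenfunction of $\Omega$ to exist only implicitly (you work with the putative eigenfunction directly), which is a small economy over a full monotonicity proof. What each approach buys: yours is more direct and self-contained once the two cited facts are granted, and handles both signs of $u$ at once; the paper's reuses its already-proved comparison lemma, needs no variational claim about the eigenfunction $v$ itself (only about the auxiliary difference inside Lemma \ref{comparison II}), and outsources strictness to McKean rather than to a monotonicity argument.
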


\begin{proof}
For a bounded domain $\Omega$, let $B$ be an open ball that contains $\overline{\Omega}$, $\lambda_B$ be the first eigenvalue associated to $B$ and $u$ be a positive eigenfunction associated to $\lambda_B$. Then $\lambda_B > \lambda_1$ from
\cite{Mk} and, therefore, $\lambda_B > \lambda$. Hence if $v$ is a solution of $-\Delta w = \lambda w$ in $\Omega$ that vanishes on $\partial \Omega$, it follows from Lemma \ref{comparison II} that $v \le \alpha u$ for any $\alpha > 0$, since $\alpha u > 0= v$ on $\partial \Omega$. Therefore, $v \le 0$. Applying the same argument for $-v$, we get that $v\ge 0$, concluding the result.
\end{proof}

\section{Nonexistence results}
\label{secaoHorobolas}
The main purpose of this section is to prove that if a domain is contained in a horoball, then there is no bounded eigenfunction associated to $\lambda \in [0,\lambda_1]$ in this domain.
For that, we need to define some barriers, which are positive radial eigenfunctions associated to $\lambda$ in the complement of a ball.

First observe that the problem $$\left\lbrace\begin{array}{l}
            -\lap u=\lambda u \text{ in }\mathbb{H}^n\\[5pt]
            u\geq 0\text{ is a bounded radially symmetric function around } o\\
            \end{array}\right.$$
 has solutions for $0 < \lambda \le \lambda_1$, which are presented as integral formulas in \cite{GO}. There, the authors also exhibit an unbounded function defined in $\mathbb{H}^n\backslash\{o\}$, that correspond to the Greens function, 
and show that both solutions converge to $0$ at infinity. Indeed the following two results are special cases of those proved in \cite{GO}.

\begin{lemma}\label{autofuncao global}
For $\lambda \in (0,\lambda_1]$ and $o \in \mathbb{H}^n$, there exists a bounded eigenfunction $u \in C^2(\mathbb{H}^n)$ associated to $\lambda$ in $\mathbb{H}^n$, radially symmetric with respect to $o$. Furthermore, there exists also an eigenfunction $v \in C^2(\mathbb{H}^n \backslash \{o\})$, radially symmetric with respect to $o$ and that has a singularity at this point. 
\end{lemma}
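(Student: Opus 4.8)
The plan is to exploit the radial symmetry to reduce \eqref{eqProblemaPrincipal} to a second order ordinary differential equation. In geodesic polar coordinates $(r,\theta)$ centered at $o$, the hyperbolic metric reads $dr^2 + \sinh^2 r\, d\theta^2$, so a radial function $u=u(r)$ satisfies
\[
\lap u = u'' + (n-1)\coth(r)\, u'.
\]
Hence an eigenfunction of eigenvalue $\lambda$ corresponds to a solution of
\begin{equation*}
u'' + (n-1)\coth(r)\, u' + \lambda u = 0, \qquad r>0. \tag{$\ast$}
\end{equation*}
The coefficient $\coth r$ is smooth on $(0,\infty)$, so the only delicate points are $r=0$, which is a regular singular point since $\coth r \sim 1/r$, and $r\to\infty$, where $\coth r \to 1$ and $(\ast)$ approaches the constant-coefficient equation $u'' + (n-1)u' + \lambda u = 0$.

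First I would construct the bounded eigenfunction $u$ by a Frobenius analysis at $r=0$. The indicial equation is $s(s-1)+(n-1)s=0$, with roots $s=0$ and $s=2-n$. The root $s=0$ yields a solution analytic at the origin; normalizing $u(0)=1$, its power series involves only even powers of $r$, so in particular $u'(0)=0$. This evenness is precisely the condition guaranteeing that the radial profile $u(r)$ defines a function of class $C^2$ (in fact smooth) across $o$ on $\Hi^n$. Standard existence and uniqueness for linear ODEs with smooth coefficients then extends $u$ to all of $(0,\infty)$.

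To produce the singular solution $v$, I would use reduction of order. Writing $(\ast)$ in self-adjoint form $\big((\sinh r)^{n-1}u'\big)' + \lambda (\sinh r)^{n-1}u = 0$, the Abel identity gives a second, linearly independent solution
\[
v(r) = u(r)\int_{r_0}^{r}\frac{ds}{u(s)^2(\sinh s)^{n-1}}.
\]
Since $u(0)=1$ and $(\sinh s)^{n-1}\sim s^{n-1}$ near $0$, the integrand behaves like $s^{1-n}$, so $v$ blows up like $r^{2-n}$ for $n\ge 3$ and like $\log r$ for $n=2$; in either case $v$ has the asserted singularity at $o$ and is $C^2$ on $\Hi^n\setminus\{o\}$.

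The main obstacle is the global control at infinity, namely the boundedness of $u$ (and of $v$ away from $o$). Here I would pass to the asymptotic regime: the characteristic roots of the limiting equation are $\mu_\pm = \tfrac12\big(-(n-1)\pm\sqrt{(n-1)^2-4\lambda}\,\big)$, which are both negative for $0<\lambda\le\lambda_1=(n-1)^2/4$. Since $|\coth r - 1| = O(e^{-2r})$ is integrable at infinity, an asymptotic-integration argument (Levinson's theorem), or equivalently the Liouville substitution $u=(\sinh r)^{-(n-1)/2}w$ which turns $(\ast)$ into a Schr\"odinger equation $-w'' + q\,w = \lambda w$ with $q(r)\to\lambda_1$, shows that every solution of $(\ast)$ decays exponentially. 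Consequently $u$ is bounded on all of $\Hi^n$ and $v$ is bounded on $\Hi^n\setminus\{o\}$, and both tend to $0$ at infinity. The borderline case $\lambda=\lambda_1$, where $\mu_+=\mu_-$ and polynomial or logarithmic factors appear, requires separate but routine bookkeeping and is the only place where the estimates must be handled with extra care.
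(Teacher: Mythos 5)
Your proposal is correct in substance, but it takes a genuinely different route from the paper: the paper offers no proof of this lemma at all, stating it as a special case of results of Grellier and Otal \cite{GO}, who construct $u$ and $v$ via explicit integral representations (the spherical functions and the Green's function) and prove their decay there. Your self-contained ODE treatment --- Frobenius analysis at the regular singular point $r=0$ (roots $s=0$ and $s=2-n$, the analytic solution being even in $r$ and hence smooth across $o$), reduction of order for the singular solution, and asymptotic integration at infinity --- replaces that citation with classical ODE theory. It even delivers more than the lemma asks: your decay analysis simultaneously proves Lemma \ref{convergesToZeroAtInfinity}, which the paper likewise defers to \cite{GO}; what the citation buys instead is explicit formulas and quantitative information such as the decay rate $Cre^{-\sqrt{\lambda}r}$ and monotonicity mentioned in the introduction, which your softer asymptotic argument does not directly reproduce (the paper recovers monotonicity separately by a maximum-principle argument in Lemma \ref{exiteautofuncao}).

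One small point to patch: the reduction-of-order formula
\[
v(r)=u(r)\int_{r_0}^{r}\frac{ds}{u(s)^2(\sinh s)^{n-1}}
\]
is valid only on intervals where $u$ does not vanish, and you never establish that $u>0$ on $(0,\infty)$. Two easy fixes: either use the formula only near $r=0$ (where $u(0)=1$) to identify the singular behavior $r^{2-n}$, respectively $\log r$ for $n=2$, and then extend $v$ to all of $(0,\infty)$ by standard continuation for linear ODEs with coefficients smooth on $(0,\infty)$; or observe that $u$ cannot vanish, since at a first zero $r_1$ the restriction of $u$ to $B_{r_1}(o)$ would be an eigenfunction associated to $\lambda\le\lambda_1$ on a bounded domain, contradicting Corollary \ref{NoEigenfunctionInBoundedDomains}. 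It would also be worth one line to justify the parity argument: the analytic Frobenius solution with $u(0)=1$ is unique because the recursion coefficient $k(k+n-2)$ never vanishes for $k\ge 1$, and the equation is invariant under $r\mapsto -r$, forcing evenness. With these remarks added, your argument is complete, including the double-root case $\lambda=\lambda_1$, where the Liouville substitution gives $w''=O(e^{-2r})\,w$, hence $w\sim a+br$ and $u\sim (a+br)(\sinh r)^{-(n-1)/2}\to 0$.
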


\begin{lemma}
\label{convergesToZeroAtInfinity}
If $u$ and $v$ are the eigenfunctions presented in the previous lemma, then $u(x)$ and $v(x)$ converge to $0$ as $d(x,o) \to \infty$.
\end{lemma}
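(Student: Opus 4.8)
The functions $u$ and $v$ are radial, so writing $r=d(\cdot,o)$ and using that the radial part of the Laplacian on $\Hi^n$ is $\lap=\partial_r^2+(n-1)\coth r\,\partial_r$, both satisfy the second order ODE
$$u''+(n-1)\coth r\,u'+\lambda u=0, \qquad r\in(0,\infty)$$
(for $v$ this holds away from the singularity at $o$). Since $u$ and $v$ are particular solutions of this linear equation, it suffices to prove that \emph{every} solution defined near $+\infty$ tends to $0$ as $r\to\infty$; this dispatches $u$ and $v$ simultaneously.

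The plan is to remove the first order term by the Liouville substitution $u=(\sinh r)^{-(n-1)/2}\,w$, which turns the equation into the normal form $w''+Q(r)\,w=0$ with
$$Q(r)=\lambda-\frac{(n-1)^2}{4}\coth^2 r+\frac{n-1}{2\sinh^2 r}=(\lambda-\lambda_1)+\frac{c}{\sinh^2 r}, \qquad c=\frac{n-1}{2}-\lambda_1.$$
Two features drive the argument: the gauge factor decays, $(\sinh r)^{-(n-1)/2}\sim 2^{(n-1)/2}e^{-(n-1)r/2}$, and the perturbation $Q(r)-(\lambda-\lambda_1)=c\,\sinh^{-2}r=O(e^{-2r})$ is absolutely integrable on $[1,\infty)$.

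I would then read off the asymptotics of $w$ in two cases. If $\lambda<\lambda_1$, set $\mu=\sqrt{\lambda_1-\lambda}\in(0,(n-1)/2)$; since $Q+\mu^2\in L^1$, asymptotic integration (a Levinson/variation-of-parameters estimate) produces a fundamental system $w_\pm(r)=e^{\pm\mu r}(1+o(1))$, so every solution is $O(e^{\mu r})$ and hence $u=(\sinh r)^{-(n-1)/2}w=O\!\big(e^{(\mu-(n-1)/2)r}\big)\to 0$, the decay being a consequence of $\mu<(n-1)/2$, i.e.\ of $\lambda>0$. If $\lambda=\lambda_1$, then $Q\in L^1$ and the limiting equation is $w''=0$, so every solution satisfies $w(r)=ar+b+o(1)=O(r)$; therefore $u=O\!\big(r\,e^{-(n-1)r/2}\big)\to 0$. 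In both cases all solutions decay, so in particular $u(x),v(x)\to 0$ as $d(x,o)\to\infty$, recovering the bound $Cr\,e^{-\sqrt{\lambda}\,r}$ quoted in the introduction.

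The main obstacle is the threshold $\lambda=\lambda_1$: there the limiting constant-coefficient equation carries a linearly growing mode, so the crude exponential dichotomy is unavailable and one must control $w$ through the integral (Gronwall) form of the equation using only $Q\in L^1$, then verify that linear growth is still beaten by the exponential gauge factor. It is worth emphasizing that this is genuinely an ODE asymptotics statement: the comparison principles of Section \ref{preliminaries} do not suffice, since the volume growth of $\Hi^n$ precludes a positive growing supersolution of $-\lap w=\lambda w$ for $\lambda>0$, so a decaying barrier cannot be made to dominate a merely bounded solution on the outer sphere of a large annulus. Alternatively, the same decay can be extracted directly from the integral formulas of \cite{GO}.
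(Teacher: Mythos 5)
Your argument is correct, but it takes a genuinely different route from the paper: for this lemma the paper offers no proof at all, stating that it (together with Lemma \ref{autofuncao global}) is a special case of results of Grellier and Otal \cite{GO}, whose integral representations of the radial eigenfunctions yield the decay at infinity. You instead give a self-contained ODE proof. The Liouville substitution $u=(\sinh r)^{-(n-1)/2}w$ is computed correctly: using $\coth^2 r=1+\sinh^{-2}r$ one indeed gets $Q(r)=(\lambda-\lambda_1)+c\,\sinh^{-2}r$ with $c=\frac{n-1}{2}-\lambda_1$, the perturbation is $O(e^{-2r})$ hence integrable, and asymptotic integration gives the fundamental systems $e^{\pm\mu r}(1+o(1))$ for $\lambda<\lambda_1$ and $\{r(1+o(1)),\,1+o(1)\}$ at $\lambda=\lambda_1$. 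Since $\mu=\sqrt{\lambda_1-\lambda}<\frac{n-1}{2}$ exactly when $\lambda>0$, \emph{every} solution of the radial equation near infinity decays after multiplication by the gauge factor, which handles $u$ and the singular solution $v$ simultaneously --- in fact you prove more than the lemma, namely decay of every radial eigenfunction outside a compact set, at an explicit exponential rate. What the paper's citation buys is brevity and the finer information in \cite{GO}; what your route buys is independence from the integral formulas and a uniform treatment of $u$ and $v$ that would extend to any radial solution.

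Two small caveats. First, at the threshold $\lambda=\lambda_1$ the standard statement producing the fundamental system asymptotic to $\{r,1\}$ uses $\int^\infty r\,|Q(r)|\,dr<\infty$, not merely $Q\in L^1$; even your Gronwall bound $|w(r)|\le Cr$ runs through $\int^\infty s\,|Q(s)|\,ds$ after dividing by $r$. This is harmless here because $Q=O(e^{-2r})$, but the parenthetical ``using only $Q\in L^1$'' should be corrected. Second, your closing claim to ``recover'' the bound $Cre^{-\sqrt{\lambda}r}$ overstates what you proved: for $\lambda<\lambda_1$ your rate is $e^{-(\sqrt{\lambda_1}-\sqrt{\lambda_1-\lambda})r}$, and $\sqrt{\lambda_1}-\sqrt{\lambda_1-\lambda}\le\sqrt{\lambda}$ with equality only at $\lambda=\lambda_1$, so your bound is strictly weaker than the one quoted in the introduction whenever $\lambda<\lambda_1$ (and generically sharp, by the $e^{\mu r}$ mode). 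Neither point affects the lemma itself, whose content is only convergence to zero.
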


These lemmas imply the next result, for which we need to remind that
if $r(x)$ is the distance between $x$ to some fixed point in $\mathbb{H}^n,$ then
\begin{equation} \label{EDOpponto}
\lap\left(u\circ r\right)(x)=u''(r)+(n-1)\coth(r)u'(r),
\end{equation}
for any $C^2$ function $u:[0,+\infty) \to \mathbb{R}$. This expression can be proved by choosing $g(x)=r(x)$ in the relation
\begin{equation}
\lap\left(u\circ g\right)(x)=u''(g(x))|\grad g(x)|^{2}+u'(g(x))\lap g(x),
\label{LaplacianoCompostasUg}
\end{equation}
that holds for any $C^2$ functions $g:\mathbb{H}^n \rightarrow I$ and $u: I \rightarrow \R$,  where $I \subset \mathbb{R}$ is some interval.

\begin{proposition}
For $o \in \mathbb{H}^n$, $R > 0$ and $\lambda \in (0,\lambda_1]$, there exists a positive bounded radially symmetric eigenfunction $\bar{u}$ associated to $\lambda$ in $\mathbb{H}^n\backslash \overline{B_R(o)}$. Moreover,
$\bar{u}(x) \to 0$ as $d(x,o) \to \infty$.
\label{eigenfunctionOutsideABall}
\end{proposition}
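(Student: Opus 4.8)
The plan is to reduce everything to the radial ODE and to build $\bar u$ as the profile that vanishes at $r=R$, stays positive beyond it, and inherits its decay from the solutions furnished by Lemma~\ref{autofuncao global}. By \eqref{EDOpponto}, a radial function $\bar u=\phi\circ r$ satisfies $-\Delta\bar u=\lambda\bar u$ on $\{r>R\}$ if and only if $\phi$ solves
$$\phi''(r)+(n-1)\coth(r)\,\phi'(r)+\lambda\,\phi(r)=0,\qquad r>R.$$
Since $R>0$, the coefficient $\coth r$ is smooth on $[R,\infty)$, so I would take $\phi$ to be the unique solution of this equation with initial data $\phi(R)=0$ and $\phi'(R)=1$. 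This choice forces $\bar u$ to vanish on $\partial B_R(o)$, so that $\bar u$ is an eigenfunction in the sense of \eqref{eqProblemaPrincipal} on the exterior domain.

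Next I would prove that $\phi>0$ on $(R,\infty)$. Suppose not, and let $r_0>R$ be its first zero; then $\phi>0$ on $(R,r_0)$ while $\phi$ vanishes at both $R$ and $r_0$. Hence $\bar u=\phi\circ r$, restricted to the bounded annulus $A=B_{r_0}(o)\setminus\overline{B_R(o)}$, is a nontrivial solution of \eqref{eqProblemaPrincipal} on $A$, that is, an eigenfunction associated to $\lambda\le\lambda_1$ on a bounded domain. This contradicts Corollary~\ref{NoEigenfunctionInBoundedDomains}, so $\phi$ has no zero in $(R,\infty)$; since $\phi'(R)=1>0$, it follows that $\phi>0$ throughout $(R,\infty)$. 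I expect this positivity step, ruling out an interior node, to be the crux of the argument, and it is precisely where the hypothesis $\lambda\le\lambda_1$ is used.

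It remains to control $\phi$ at infinity. Let $f$ and $g$ denote the radial profiles of the eigenfunctions $u$ and $v$ of Lemma~\ref{autofuncao global}. They solve the same second order linear equation and are linearly independent on $(R,\infty)$, because $f$ extends smoothly to $r=0$ whereas $g$ is singular there; equivalently, their Wronskian $W=fg'-f'g$ satisfies $W'=-(n-1)\coth(r)\,W$ and therefore never vanishes once it is nonzero. Consequently $\phi=a f+b g$ for suitable constants $a,b$. By Lemma~\ref{convergesToZeroAtInfinity}, both $f(r)$ and $g(r)$ tend to $0$ as $r\to\infty$, so $\phi(r)\to 0$; being continuous on $[R,\infty)$ and convergent at infinity, $\phi$ is bounded. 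Setting $\bar u=\phi\circ r$ then gives a positive, bounded, radially symmetric solution of $-\Delta\bar u=\lambda\bar u$ on $\mathbb{H}^n\setminus\overline{B_R(o)}$ that vanishes on $\partial B_R(o)$ and satisfies $\bar u(x)\to 0$ as $d(x,o)\to\infty$, which is the desired conclusion.
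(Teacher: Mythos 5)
Your proof is correct and follows essentially the same route as the paper: both construct $\bar u$ from a linear combination of the regular and singular radial eigenfunctions of Lemma~\ref{autofuncao global} vanishing at $r=R$, obtain positivity by ruling out a nodal annulus via Corollary~\ref{NoEigenfunctionInBoundedDomains}, and get decay and boundedness from Lemma~\ref{convergesToZeroAtInfinity}. The only difference is cosmetic: you normalize by the initial value problem $\phi(R)=0$, $\phi'(R)=1$ and then identify $\phi=af+bg$ via the Wronskian, whereas the paper picks the coefficients $\alpha,\beta$ of the combination directly.
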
 

\begin{proof}
Let $u$ and $v$ be the radially symmetric eigenfunctions presented in Lemma \ref{autofuncao global}. According to \eqref{EDOpponto}, they are solutions of
$$ w''(r)+(n-1)\coth(r)w'(r)+ \lambda w(r)=0,$$
where $r=dist(x,o)$ and the prime symbol denote the derivation with respect to $r$. Since $u$ and $v$ are linearly independent, there exist $\alpha$ and $\beta$ reals 
such that $\alpha u(R) + \beta v(R) = 0$ and $\alpha u + \beta v \not\equiv 0$. Therefore $\bar{u} := \alpha u + \beta v$ satisfies $-\Delta \bar{u} = \lambda \bar{u}$, $\bar{u}=0$ on $\partial B_R(o)$, and $\bar{u}$ is radially symmetric with respect to $o$. Since $u$ and $v$ converges to zero at infinity, the same holds for $\bar{u}$. Hence $\bar{u}$ is bounded in $\mathbb{H}^n\backslash B_R(o)$. Furthermore $\bar{u}$ cannot change sign in $\mathbb{H}^n\backslash B_R(o)$, otherwise $\bar{u}$ would be an eigenfunction associated to $\lambda$ for some bounded annulus $A$, yielding a contradiction with Corollary \ref{NoEigenfunctionInBoundedDomains}. Hence we can suppose that $\bar{u}$ is positive in $\mathbb{H}^n\backslash \overline{B_R(o)}$.
\end{proof}

Finally we obtain some properties of radial eigenfunctions defined in $\mathbb{H}^n$ or outside a ball. 
\begin{lemma}\label{exiteautofuncao}
Suppose that $u$ is a positive bounded eigenfunction in $\mathcal{O}=\mathbb{H}^n$ or $\mathcal{O}=\mathbb{H}^n\backslash B_R(o)$, radially symmetric with respect to $o$.
\\ (i) If $\mathcal{O}=\mathbb{H}^n$, then $u(r)$ is a decreasing function of $r=dist(x,o)$ and does not admit any critical point.
\\ (ii) If $\mathcal{O}=\mathbb{H}^n\backslash B_R(o)$, then there exists $R_0$ such that $u(r)$ is increasing in $[R,R_0]$ and decreasing in $[R_0,+\infty)$.
\end{lemma}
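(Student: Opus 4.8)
The plan is to reduce the problem to the radial ODE \eqref{EDOpponto} and rewrite it in divergence (Sturm--Liouville) form. Writing $u=u(r)$, radial symmetry and $-\lap u=\lambda u$ give $u''(r)+(n-1)\coth(r)\,u'(r)+\lambda u(r)=0$. Multiplying by the integrating factor $\sinh^{n-1}(r)$ and setting $\phi(r):=\sinh^{n-1}(r)\,u'(r)$, I compute
$$\phi'(r)=\sinh^{n-1}(r)\big[u''(r)+(n-1)\coth(r)\,u'(r)\big]=-\lambda\,\sinh^{n-1}(r)\,u(r)<0,$$
using $u>0$ and $\lambda>0$. Hence $\phi$ is \emph{strictly decreasing} on the interval where $u$ is defined, and the whole proof reduces to tracking the sign of $\phi$ (equivalently of $u'$, since $\sinh^{n-1}(r)>0$ for $r>0$) together with the relevant boundary data.

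For part (i), I would use that $u\in C^2(\mathbb{H}^n)$ is radial, which forces $u'(0)=0$ and thus $\phi(r)\to 0$ as $r\to 0^+$. Since $\phi$ is strictly decreasing, $\phi(r)<0$, i.e. $u'(r)<0$, for every $r>0$. This simultaneously shows that $u$ is strictly decreasing and that it has no critical point for $r>0$.

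For part (ii) the boundary condition gives $u(R)=0$ while $u>0$ for $r>R$. First I would pin down the inner sign: if $\phi(R)\le 0$, then monotonicity of $\phi$ forces $\phi<0$, hence $u'<0$, on all of $(R,\infty)$, so $u$ would decrease below $u(R)=0$, contradicting $u>0$; therefore $\phi(R)>0$. Next I rule out a globally increasing profile: if $\phi\ge 0$ (that is $u'\ge 0$) on all of $[R,\infty)$, then $u\ge c>0$ for $r\ge r_1$ for some $r_1>R$, and integrating $\phi'(r)\le -\lambda c\,\sinh^{n-1}(r)$ together with $\int^{\infty}\sinh^{n-1}=+\infty$ drives $\phi\to-\infty$, contradicting $\phi\ge 0$. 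Thus $\phi$ is positive at $R$ and negative somewhere; being continuous and strictly decreasing, it vanishes at a unique $R_0>R$, giving $u'>0$ on $[R,R_0)$ and $u'<0$ on $(R_0,\infty)$, which is exactly the claimed monotonicity.

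I expect the main obstacle to be the sign analysis at the two ends in part (ii): establishing $u'(R)>0$ at the inner boundary and, above all, excluding the globally increasing case. The latter is where the geometry enters, through the factor $\sinh^{n-1}$ whose non-integrability at infinity (the exponential volume growth of $\mathbb{H}^n$) is precisely what forces $\phi$ to change sign. It is worth noting that, phrased this way, the argument uses only the positivity of $u$, not its assumed boundedness or convergence to $0$ at infinity.
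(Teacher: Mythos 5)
Your proof is correct, but it takes a genuinely different route from the paper's. The paper argues via the maximum principle: since $u>0$ and $-\Delta u=\lambda u\ge 0$, $u$ is superharmonic and hence admits no interior local minimum; for (i) this excludes a minimum at $o$ or in an open annulus, and for (ii) the paper first extracts a local maximum $R_0$ from $u(R)=0$ together with $u(r)\to 0$ at infinity (citing Lemma \ref{convergesToZeroAtInfinity}) and then invokes the same no-interior-minimum argument to show the maximum point is unique. You instead work at the ODE level, tracking the sign of the flux $\phi(r)=\sinh^{n-1}(r)\,u'(r)$, which is strictly decreasing because $\phi'=-\lambda\sinh^{n-1}(r)\,u(r)<0$ wherever $u>0$; the boundary datum $\phi\to 0$ as $r\to 0^+$ settles (i), and in (ii) the divergence of $\int^{\infty}\sinh^{n-1}(t)\,dt$ rules out an eventually nonnegative $\phi$, so the strictly decreasing $\phi$ has a unique zero $R_0$. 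Your version buys something real: it uses only positivity of $u$, not boundedness or decay at infinity, whereas the paper's proof of (ii) leans on Lemma \ref{convergesToZeroAtInfinity}, which as stated covers only the two particular eigenfunctions of Lemma \ref{autofuncao global} (to apply it to an arbitrary positive bounded radial eigenfunction one must add the easy but unstated remark that the radial solution space is spanned by those two functions); your argument sidesteps this entirely, and moreover yields strict monotonicity and uniqueness of the critical radius. What the paper's approach buys in exchange is brevity and robustness: superharmonicity plus the maximum principle does not depend on the reduction to a one-dimensional Sturm--Liouville problem and adapts beyond the radially symmetric setting. One small point worth keeping explicit in your write-up: at $r=R$ one has $\phi'(R)=0$ since $u(R)=0$, but $\phi'<0$ for all $r>R$, so $\phi$ is still strictly decreasing on all of $[R,\infty)$ and your sign analysis at the inner boundary goes through verbatim.
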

\begin{proof} (i) If $u$ is not decreasing with respect to the radius $r(x)=dist(x,o)$, then either $o$ is the minimum point of $u$ in some ball $B_R(o)$ or there exists some open annulus $\{ x \, : \, R_1 < r(x) < R_2\}$ that has points of local minimum of $u$. In both cases, we have a contradiction with the maximum principle and the fact that $u$ is superharmonic.  
\\ (ii) Since $u(R)=0$ and $\lim_{r\to +\infty} u(r)=0$ according to Lemma \ref{convergesToZeroAtInfinity}, then there exists some point of local maximum $R_0 > R$.
If there are other points of local maximum, then we can find some open annulus that has points of local minimum of $u$, yielding a contradiction as in the proof of (i).
\end{proof}

\subsection{Nonexistence result for domains of a horoball}

A horosphere $H$ determines two noncompact sets, one of them is a horoball $B$ and the other, $B^c$, corresponds to
the exterior of $B$.
If $g(x)=d(x)=dist(x,H)$ is the distance from $x$ to $H=\partial B$, then relation \eqref{LaplacianoCompostasUg} implies that
\begin{equation} \label{EDO horo interior}
\lap\left(u\circ d\right)(x)=u''(d)-(n-1)u'(d) \text{ if } x\in B
\end{equation}
and
\begin{equation} \label{EDO horo exterior}
\lap\left(u\circ d\right)(x)=u''(d)+(n-1)u'(d) \text{ if } x\notin B.
\end{equation}
Hence the eigenfunctions defined in the horoball $B$, of the form 
$v=v(d(x))$, satisfy 
$$ v''(d)-(n-1)v'(d) + \lambda v(d) =0 \quad {\rm and } \quad v(0)=0,$$
which is solved by multiples of

\begin{equation} v(d)= \left\{ \begin{array}{ll} \,d \,e^{\frac{(n-1)d}{2}} & \quad {\rm if} \quad \lambda = \lambda_1  \\[5pt] e^{\frac{(n-1)d}{2}}\left( e^{\frac{ \sqrt{(n-1)^2 -4\lambda}}{2}d} - e^{-\frac{ \sqrt{(n-1)^2 -4\lambda}}{2}d} \right) & \quad {\rm if} \quad \lambda < \lambda_1 \end{array} \right.
\label{vUnboundedEigenfucntionInHoroball}
\end{equation}
We refer to the function $v$ as the usual eigenfunction of the horoball. Observe that it does not change sign and is not bounded.

From now on we assume that $\Omega$ is an unbounded open set of $\mathbb{H}^n$, otherwise, from Corollary \ref{NoEigenfunctionInBoundedDomains}, there is no eigenfunction associated to $\lambda \le \lambda_1$.

\begin{lemma}\label{lemma do C0 e d0}
There exist positive constants $d_0$ and $C_0$, that depend only on $n$, such that if $\Omega$ is an open subset of a horoball $B$ and $u$ is a bounded eigenfunction associated to $\lambda \in [0,\lambda_1]$ in $\Omega$, then 
$$ |u(x)| \le C_0 \; dist(x,\partial B) \sup_{\Omega} |u| \quad {\rm if } \quad dist(x,\partial B) \le d_0 .$$
The same inequality holds if $\Omega \subset \mathbb{H}^n \backslash B$.
\label{controleDeUPelaDistanciaAFronteira}
\end{lemma}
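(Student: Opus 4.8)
The plan is to dominate $|u|$ by an explicit radial supersolution that vanishes linearly on the horosphere $\partial B$, and to cope with the unboundedness of $\Omega$ by exhaustion. Write $d=d(x)=\mathrm{dist}(x,\partial B)$ and $M=\sup_\Omega|u|$. Since the equation is linear, it suffices to bound $u$ from above by a nonnegative supersolution $\psi$ satisfying $\psi\le C_0\,d\,M$ for $d\le d_0$; applying the same to $-u$ then yields $|u|\le\psi$. For the interior case $\Omega\subset B$ I would use the $\lambda$-independent barrier $\phi(d)=d\,e^{(n-1)d/2}$, and for the exterior case $\Omega\subset\Hi^n\setminus B$ the barrier $\phi(d)=d\,e^{-(n-1)d/2}$. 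Writing $\phi=e^{\pm(n-1)d/2}g$ with $g(d)=d$ and using \eqref{EDO horo interior}--\eqref{EDO horo exterior}, a direct computation gives
$$-\lap\phi-\lambda\phi=e^{\pm(n-1)d/2}\bigl(-g''+\lambda_1 g-\lambda g\bigr)=e^{\pm(n-1)d/2}(\lambda_1-\lambda)\,d\ge 0,$$
so $\phi$ is a supersolution for every $\lambda\in[0,\lambda_1]$. Fixing $d_0$ and setting $\psi=\dfrac{M}{\phi(d_0)}\phi$, one checks for $0<d\le d_0$ that $\psi(x)\le C_0\,d\,M$ with $C_0=e^{(n-1)d_0/2}/d_0$, a constant depending only on $n$.

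The serious obstacle is that the comparison principles of Section \ref{preliminaries} require a bounded domain, whereas the region where I want to compare, $S=\Omega\cap\{0<d<d_0\}$, is unbounded. The resolution I propose is to normalize the upper half-space model so that $\partial B=\{y=c\}$, to work in the truncations $\Omega_R'=S\cap\{|x|<R\}$ (which are relatively compact in $\Hi^n$, being bounded away from $y=0$ and $y=\infty$), and to add an auxiliary barrier that is large on the new part $\{|x|=R\}$ of the boundary. For this I would construct, by separation of variables $G(x,y)=T(x)\,Y(y)$, a positive function on $S$ with $-\lap G=\lambda G$ that blows up as one recedes to infinity along the horospheres (i.e. $|x|\to\infty$): take $T>0$ solving $\lap_{\mathrm{euc}}T=\kappa T$ on the flat horosphere with $T\to\infty$ at infinity, and $Y>0$ the corresponding transverse factor, which stays positive on the interval $\{0\le d\le d_0\}$ for $\kappa$ small (a perturbation of $Y=y^{s}$). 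Then $\psi+\epsilon G$ is again a supersolution for each $\epsilon>0$.

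With these ingredients I would apply Comparison Principle II (Lemma \ref{comparison II}) on $\Omega_R'$. This is legitimate because $\lambda\le\lambda_1<\lambda_1(S)\le\lambda_1(\Omega_R')$, the strict inequality $\lambda_1(S)=\lambda_1+\pi^2/d_0^2>\lambda_1$ coming from separating variables in the slab (after removing the drift the transverse problem is $-g''=(\mu-\lambda_1)g$ on $(0,d_0)$ with Dirichlet data). On $\partial\Omega_R'$ one verifies $\psi+\epsilon G>u$ piece by piece: on $\partial\Omega$ and on $\{d=0\}\cap\overline\Omega$ one has $u=0<\epsilon G$; on $\{d=d_0\}$ one has $u\le M=\psi$; and on $\{|x|=R\}$ the term $\epsilon G$ exceeds $M\ge u$ once $R$ is large, since $G\to\infty$ uniformly there. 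Hence $u\le\psi+\epsilon G$ on each $\Omega_R'$; letting $R\to\infty$ gives $u\le\psi+\epsilon G$ on $S$, and then $\epsilon\to0$ at a fixed point gives $u\le\psi$. Repeating for $-u$ yields $|u|\le\psi\le C_0\,d\,M$ for $d\le d_0$, with $C_0,d_0$ depending only on $n$.

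I expect the main difficulty to be exactly this passage to the unbounded domain: producing a positive supersolution on the slab that grows at the horospherical infinity (so that the truncation error is controllable) and verifying $\lambda_1(S)>\lambda$ uniformly in $\lambda$, so that Lemma \ref{comparison II} may be invoked on the truncations $\Omega_R'$. The explicit vertical barrier $\phi$, by contrast, is an elementary computation, is uniform in $\lambda\in[0,\lambda_1]$ (including the harmonic case $\lambda=0$), and is the same for both the interior and exterior cases up to the sign in the exponent.
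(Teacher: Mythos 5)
Your proof is correct, and it takes a genuinely different route from the paper's. The paper never compares on an unbounded region: given $x_0$ with $dist(x_0,\partial B)<d_0$, it places a geodesic ball of radius one on the far side of the horosphere, takes the positive radial eigenfunction $w$ of eigenvalue $\lambda_1$ in the complement of that ball (Proposition \ref{eigenfunctionOutsideABall}), and compares $|u|$ with $2(\sup_\Omega|u|)\,w$ only on the bounded set $\Omega\cap B_{R_0}(o)$, where $R_0$ is the radius at which $w$ is maximal (Lemma \ref{exiteautofuncao}(ii)); because the barrier attains its maximum precisely on the artificial boundary $\partial B_{R_0}(o)$, no growth at infinity is ever needed, and the linear vanishing comes from the Lipschitz constant of $w$ near the tangency point, yielding $d_0=R_0-1$ and $C_0=2C$ by homogeneity of $\Hi^n$. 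You instead use the explicit horospherical supersolution $\phi(d)=d\,e^{\pm(n-1)d/2}$ --- your computation $-\lap\phi-\lambda\phi=e^{\pm(n-1)d/2}(\lambda_1-\lambda)\,d\ge 0$ is right, and for $\lambda=\lambda_1$ in the interior case $\phi$ is exactly the function \eqref{vUnboundedEigenfucntionInHoroball} --- and you pay for the unbounded slab with a Phragm\'en--Lindel\"of exhaustion via the horizontally growing eigenfunction $G$. What your route buys: fully explicit constants (any $d_0$ works, with $C_0=e^{(n-1)d_0/2}/d_0$), no reliance on the Grellier--Otal radial eigenfunctions or on the monotonicity and Lipschitz facts of Section \ref{secaoHorobolas}, and uniformity in $\lambda\in[0,\lambda_1]$ including the harmonic case; what the paper's route buys is that every comparison happens in a bounded domain with a barrier already in hand, so nothing like your auxiliary $G$ is required. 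Two small repairs: the claimed equality $\lambda_1(S)=\lambda_1+\pi^2/d_0^2$ should be an inequality (for $S$ inside the slab you only get $\ge$, and identifying the slab's bottom of spectrum requires, say, Barta's inequality applied to the positive solution $e^{(n-1)d/2}\sin(\pi d/d_0)$), but it is also superfluous --- your truncations $\Omega_R'$ are bounded, so $\lambda_1(\Omega_R')>\lambda_1\ge\lambda$ follows from McKean exactly as in the paper's own use of Corollary \ref{NoEigenfunctionInBoundedDomains}, which is all Lemma \ref{comparison II} needs; and for $G$ you should record that $T\to\infty$ uniformly as $|x|\to\infty$ (e.g.\ $T(x)=\prod_i\cosh\bigl(\sqrt{\kappa/(n-1)}\,x_i\bigr)$, using $\max_i|x_i|\ge|x|/\sqrt{n-1}$), while the positivity of the perturbed transverse factor $Y$ for small $\kappa$ is indeed immediate from continuous dependence on the compact $y$-interval, and $\kappa$ may depend on $\lambda$ harmlessly since $C_0$ and $d_0$ come from $\psi$ alone.
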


\begin{proof}
For any ball $B_1$ of radius one, from Proposition \ref{eigenfunctionOutsideABall}, there exists a radially symmetric eigenfunction $w$ associated to $\lambda_1$ in $\mathbb{H}^n \backslash B_1$. Moreover, defining $r(x)$ as the distance between $x$ and the center of $B_1,$ (ii) of Lemma \ref{exiteautofuncao} implies that there exists $R_0 >1$, depending only on $n$, such that $w(r)$ is increasing in $[1,R_0]$ and decreasing in $[R_0,+\infty)$. By normalizing, we can assume that $w(R_0)=1$ and, from its regularity, $w$ is a Lipschitz function in $\overline{B_{R_0}\backslash B_1}$. We denote the Lipschitz constant of $w$ in this set by $C$. Observe that $R_0$ and $C$ are the same for any ball of radius one, since $\mathbb{H}^n$ is homogeneous. Define $d_0=R_0-1$.

Given $x_0 \in \Omega$ such that $dist(x_0,\partial B) < d_0$, let $x_1 \in \partial B$ that satisfies $dist(x_0,x_1)=dist(x_0,\partial B)$.
For positive $\epsilon < d_0 - dist(x_0, \partial B) $, consider the ball $B_1(o) \subset \mathbb{H}^n \backslash B$ centered at $o$ with radius one, such that $x_1 \in \partial B_{1+\epsilon}(o)$. 
Then $d(x_1,o)=1+\epsilon$ and 
\begin{equation} d(x_0,o) \le d(x_0,x_1)+ d(x_1,o)=d(x_0,x_1) + 1+\epsilon < 1+d_0 = R_0.
\label{distanciaX0o}
\end{equation} 
Let $w$ be the positive radial eigenfunction associated to $\lambda_1$ in $\mathbb{H}^n \backslash B_1(o)$, as we mentioned previously. 
Now define the barrier
$$ \bar{w}(x) =  2 (\sup_{\Omega}|u|) w(x). $$
We can prove that $\bar{w} > |u|$ in the closure of $\Omega \cap B_{R_0}(o)$. For this observe that $\partial (\Omega \cap B_{R_0}(o)) \subset \partial \Omega \cup \partial B_{R_0}(o) $. On $\partial B_{R_0}(o)$, $w(x)=\max|w|=1$ and, therefore, $\bar{w}=2 \sup|u| > |u|$ on $\partial B_{R_0}(o)$. Moreover, $\bar{w} > |u|=0$ on $\partial \Omega$, since $\bar{w} > 0$ in $\mathbb{H}^n \backslash \overline{B_{1}(o)} \supset \mathbb{H}^n \backslash B_{1+\epsilon}(o) \supset \overline{\Omega} \supset \partial \Omega$. Then, it follows that $\bar{w} > |u|$ on $\partial (\Omega \cap B_{R_0}(o))$.
We have also, from Corollary \ref{NoEigenfunctionInBoundedDomains}, that $\lambda_1(\Omega \cap B_{R_0}(o)) > \lambda_1 \ge \lambda$, since $\Omega \cap B_{R_0}(o)$ is bounded. Hence, Lemma \ref{comparison II} implies that $\bar{w} > |u|$ in $\overline{\Omega \cap B_{R_0}(o)}$.

Therefore, using \eqref{distanciaX0o}, we obtain $|u(x_0)| \le \bar{w}(x_0)$. Observe that $\bar{w}$ is a Lipschitz function in $B_{R_0}(o)\backslash B_{1}(o)$ and its Lipschitz constant is $2C \sup|u|$. Hence, for $x_2 \in \partial B_1(o)$ such that $dist(x_2,x_1)=\epsilon$, 
 \begin{align*} |u(x_0)| \le \bar{w}(x_0)= \bar{w}(x_0)-\bar{w}(x_2) &\le 2C \sup |u| dist(x_0,x_2) \\[5pt]
                                                                       &\le 2C \sup |u| ( dist(x_0,x_1) + \epsilon). \end{align*}
Since $\epsilon >0$ can be taken arbitrarily small and $dist(x_0,x_1)=dist(x_0,\partial B)$, defining $C_0=2C$, the result follows if $dist(x_0,\partial B) < d_0$.
From the continuity of $u$, the result also holds if $dist(x_0,\partial B) = d_0$. In the case $\Omega \subset \mathbb{H}^n \backslash B$, the argument is the same.
\end{proof}

Given a horosphere $H$ that is the boundary of a horoball $B$, define the horospheres with respect to $B$ by
$$H_d=\left\lbrace \begin{array}{l}
                 \{x\in B \; |\; dist(x,H)=d\} \text{ if }d > 0\\[5pt]
                 \{x\notin B \; | \; dist(x,H)=-d\} \text{ if }d\le 0
                 \end{array}\right.$$
The horoannulus $A_{a,b}$ is the domain bounded by $H_a$ and $H_b,$ that is,
$$A_{a,b}=\{x\in \Hi^n \; |\; x\in H_d \text{ for some }d \in (a,b)\}.$$

From the previous lemma, we obtain the next result.

\begin{theorem}
Consider $C_0$ and $d_0$ as in Lemma \ref{lemma do C0 e d0}. Let $C > C_0$ and $B$ be a horoball. Suppose that either $\Omega \subset A_{0,b}$ or $\Omega \subset A_{-b,0}$ for some positive $b \le \min \{ 1/C, d_0 \}.$ Then there is no bounded eigenfunction associated to $\lambda \in [0,\lambda_1]$ in $\Omega$.
\label{nonexistenceIfWidthIsSmall}
\end{theorem}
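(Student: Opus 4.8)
The plan is to leverage the pointwise estimate from Lemma \ref{lemma do C0 e d0} together with the smallness of the horoannulus width to force the supremum of $|u|$ to vanish. First I would observe that the hypotheses place $\Omega$ inside a horoannulus of width at most $b \le d_0$ adjacent to the horosphere $H=\partial B$: if $\Omega \subset A_{0,b}$ then $\Omega \subset B$ and $dist(x,\partial B)=dist(x,H)\le b$ for every $x \in \Omega$, while if $\Omega \subset A_{-b,0}$ then $\Omega \subset \mathbb{H}^n \backslash B$ and again $dist(x,\partial B)\le b$. In either case every point of $\Omega$ lies within distance $b \le d_0$ of $\partial B$, so Lemma \ref{lemma do C0 e d0} applies at every $x \in \Omega$, the exterior case being covered by the final sentence of that lemma.

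Suppose, for contradiction, that $u$ is a nontrivial bounded eigenfunction associated to some $\lambda \in [0,\lambda_1]$ in $\Omega$, and write $M=\sup_{\Omega}|u|$, which is finite precisely because $u$ is assumed bounded. Applying the lemma and then the width bound $dist(x,\partial B)\le b$ gives, for each $x \in \Omega$,
$$ |u(x)| \le C_0 \, dist(x,\partial B)\, M \le C_0\, b\, M. $$
Taking the supremum over $x \in \Omega$ yields the self-improving inequality $M \le C_0 b\, M$, that is, $(1-C_0 b)\,M \le 0$.

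The conclusion then follows from the arithmetic already built into the hypotheses. Since $C>C_0$ and $b \le 1/C$, we have $C_0 b \le C_0/C < 1$, so $1-C_0 b > 0$; hence $M \le 0$, forcing $M=0$ and $u \equiv 0$, which contradicts that $u$ is a nontrivial eigenfunction. I do not expect a genuine obstacle here: all of the analytic difficulty has been absorbed into Lemma \ref{lemma do C0 e d0}, and what remains is simply to notice that the width $b$ is small enough relative to the constant $C_0$ to make the inequality $M \le C_0 b\, M$ collapse. The only points requiring care are that boundedness of $u$ is exactly what guarantees $M<\infty$ (so the inequality is meaningful rather than an empty $\infty \le \infty$), and that the strict inequality $C>C_0$ is precisely what is needed to secure $C_0 b < 1$.
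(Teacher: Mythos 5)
Your proof is correct and is exactly the argument the paper intends: the theorem is stated as an immediate consequence of Lemma \ref{lemma do C0 e d0} (``From the previous lemma, we obtain the next result''), and your derivation $M \le C_0 b\, M$ with $C_0 b \le C_0/C < 1$ forcing $M=0$ is that consequence, with the relevant details (applicability of the lemma on both sides of $H$, finiteness of $M$, strictness of $C>C_0$) properly checked.
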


\begin{lemma}
Let $B$ be a horoball in $\mathbb{H}^n$ and $d_1=\frac{1}{2}\min \{ 1/C_0, d_0 \}$. There exists $0< \delta \le d_1$, that depends only on $n$, such that if $u$ is a bounded eigenfunction associated to $\lambda$ in $\Omega \subset A_{0,d}$, where $d > d_1$, then there is a positive bounded eigenfunction $\tilde{u}$ associated to $\lambda$ in some $\tilde{\Omega} \subset A_{0,d - \delta}$.
\label{solutionInASmallerHoroannulus}
\end{lemma}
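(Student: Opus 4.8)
The plan is to run a normal--families argument: translate a near--maximizing sequence back to a fixed region, pass to a limit eigenfunction, take the connected component where it is positive, and then confine that component to a slab that is thinner by a fixed dimensional amount $\delta$. I would first normalize $\sup_\Omega|u|=1$ and, replacing $u$ by $-u$ if necessary, assume $\sup_\Omega u=1$ (if $u\equiv 0$ there is nothing to prove).

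The first genuine step is to produce the mirror image of Lemma \ref{lemma do C0 e d0} at the \emph{inner} horosphere $H_d$: for $x\in\Omega$ with $dist(x,H_d)\le d_0$ one has $|u(x)|\le C_0\,dist(x,H_d)$. This is proved exactly as Lemma \ref{lemma do C0 e d0}, now placing the unit balls $B_1(o)$ from Proposition \ref{eigenfunctionOutsideABall} in the part of $B$ strictly deeper than $H_d$ (which is disjoint from $\Omega\subset A_{0,d}$ since its points have $dist(\cdot,H)<d$), tangent to $H_d$ from the deep side, and using the radial eigenfunction $w$ associated to $\lambda_1$ as the barrier $\bar w=2(\sup_\Omega|u|)w$. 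On $\partial B_{R_0}(o)$ one has $\bar w=2>|u|$, on $\partial\Omega$ one has $\bar w>0=|u|$, and Lemma \ref{comparison II} applies because $\Omega\cap B_{R_0}(o)$ is bounded, so $\lambda_1(\Omega\cap B_{R_0}(o))>\lambda_1\ge\lambda$ by Corollary \ref{NoEigenfunctionInBoundedDomains}; homogeneity of $\Hi^n$ gives the same $R_0$, $C$, hence the same $C_0=2C$ and $d_0=R_0-1$. Combining this with Lemma \ref{lemma do C0 e d0} itself, every point where $|u|\ge 1/2$ satisfies $dist(x,H_0)\ge c_1$ and $dist(x,H_d)\ge c_1$, where $c_1:=\min\{d_0,1/(2C_0)\}$ depends only on $n$ and satisfies $c_1\ge d_1$; thus near--maxima are bounded away from both bounding horospheres by a fixed dimensional amount.

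Next I would pick $x_k\in\Omega$ with $u(x_k)\to 1$ and apply the parabolic isometries $\psi_k$ fixing the ideal point of $B$. These preserve each horosphere $H_t$, hence map $A_{0,d}$ into itself, and I choose them so that all $\psi_k(x_k)$ share a fixed horizontal position; passing to a subsequence, their depths $t_k$ converge to some $t_*\in[c_1,d-c_1]$. Since $u_k:=u\circ\psi_k^{-1}$ solves $-\Delta u_k=\lambda u_k$ with $|u_k|\le 1$, interior Schauder and Harnack estimates yield a $C^2_{loc}$--convergent subsequence whose limit $u_\infty$ is a bounded eigenfunction on a limit domain $\Omega_\infty\subset A_{0,d}$, with $u_\infty(p_*)=1=\sup u_\infty$ at an interior point $p_*$ of depth $t_*$; both near--horosphere estimates are translation invariant and so pass to the limit. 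Taking $\tilde\Omega$ to be the connected component of $\{u_\infty>0\}$ containing $p_*$ and $\tilde u:=u_\infty|_{\tilde\Omega}$, we obtain a \emph{positive} bounded eigenfunction with $\tilde u=0$ on $\partial\tilde\Omega$ and $\sup\tilde u=1$.

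The main obstacle is the final step: forcing $\tilde\Omega\subset A_{0,d-\delta}$ for some $\delta=\delta(n)>0$. The inner estimate only gives $\tilde u(x)\le C_0\,dist(x,H_d)$, i.e. \emph{smallness} rather than vanishing near $H_d$, so a priori the positive set can still approach $H_d$. To rule this out I would exploit the constant--coefficient structure \eqref{EDO horo interior} of solutions depending only on the depth $d(x)$: besides the usual eigenfunction $v$ of \eqref{vUnboundedEigenfucntionInHoroball} (vanishing on $H_0$, positive and increasing), the reflected profile $\phi(x):=-v(d(x)-d)$ is a positive bounded solution on $A_{0,d}$ that vanishes on $H_d$, while the slides $v_t(x)=v(d(x)-t)$ solve the equation and vanish on $H_t$. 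Comparing $\tilde u$ with suitable multiples of these barriers through Lemmas \ref{comparison} and \ref{comparison II} (legitimate on the bounded pieces, where $\lambda<\lambda_1(\cdot)$ by Corollary \ref{NoEigenfunctionInBoundedDomains}) traps $\tilde u$ between profiles that vanish at the two ends, and the crux is to upgrade the smallness near $H_d$ into a genuine sign obstruction that keeps $\{\tilde u>0\}$ a definite distance $\delta(n)$ below $H_d$, \emph{uniformly in} $d$. Once this dimensional gap is available, $\tilde u$ on $\tilde\Omega\subset A_{0,d-\delta}$ is the desired eigenfunction, and $\delta\le d_1$ can be arranged by shrinking. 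I expect this uniform--gap estimate, and not the compactness or the positivity, to be the heart of the proof.
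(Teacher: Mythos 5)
You have correctly assembled the two estimates the paper uses (the boundary-distance bound of Lemma \ref{lemma do C0 e d0} at $H_0$ and its mirror at $H_d$, which is indeed covered by the last sentence of that lemma applied to the horoball bounded by $H_d$), but the proof is incomplete at exactly the point you flag: nothing in your argument converts the \emph{smallness} $|u|\le C_0\,dist(\cdot,H_d)\sup|u|$ near $H_d$ into the uniform gap $\tilde\Omega\subset A_{0,d-\delta}$, and you explicitly leave this ``heart of the proof'' open. The missing idea is that linearity lets you subtract a strictly positive eigenfunction \emph{before} taking the positive part, creating a positivity threshold. The paper sets $u_0=u/\sup|u|$ and $v_0=\tilde v/v(d+2)$, where $\tilde v(x)=v(dist(x,H_{-1}))$ is the usual increasing eigenfunction \eqref{vUnboundedEigenfucntionInHoroball} of the enlarged horoball bounded by $H_{-1}$; since $1\le dist(x,H_{-1})\le d+1$ on $A_{0,d}$, one gets $\gamma_1:=v(1)/v(d+2)\le v_0\le\gamma_2:=v(d+1)/v(d+2)<1$ there. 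Then $\tilde u:=(u_0-v_0)^+$, on a component $\tilde\Omega$ of $\{u_0>v_0\}$, is a positive bounded eigenfunction (nonempty because $\sup u_0=1>\gamma_2$; supported in $\Omega$ because $u_0-v_0<0$ on $\partial\Omega$), and the gap comes for free: $\tilde u(x)>0$ forces $u_0(x)>\gamma_1$, whence your own inner estimate gives $dist(x,H_d)>\gamma_1/C_0$, so $\delta=\frac12\min\{d_0,\gamma_1/C_0\}$ works. The threshold $\gamma_1$, not a limiting procedure, is the ``sign obstruction'' you were searching for; your reflected profile $\phi(x)=-v(d(x)-d)$ vanishes at $H_d$ and cannot produce a $d$-independent standoff by comparison from above.

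Separately, the normal-families scaffolding you do carry out has its own unjustified step. Since $\Omega$ is an arbitrary open subset of the slab, there is no boundary regularity, hence no uniform modulus of continuity for the zero-extensions of $u_k=u\circ\psi_k^{-1}$ near $\partial\psi_k(\Omega)$ at points far from both horospheres (Lemma \ref{lemma do C0 e d0} only controls $u$ within distance $d_0$ of $H_0$ and $H_d$, and interior gradient bounds degenerate like the inverse distance to $\partial\Omega$). Consequently the ``limit domain'' $\Omega_\infty$ and, more seriously, the claim that $\tilde u=u_\infty$ vanishes continuously on all of $\partial\tilde\Omega$ are not established: on the part of $\partial\tilde\Omega$ arising from boundaries of the translated domains you know nothing, while the paper's notion of eigenfunction requires $\tilde u\in C(\overline{\tilde\Omega})$ with $\tilde u=0$ on $\partial\tilde\Omega$. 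So even granting the gap estimate, the compactness route would need repair, whereas the paper's subtraction argument needs no limits at all.
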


\begin{proof}
For the horosphere $H_{-1}$ with respect to $B$, consider the function $\tilde{v}(x)=v(dist(x, H_{-1}))$, where $v$ is given by \eqref{vUnboundedEigenfucntionInHoroball}. Observe that $\tilde{v}$ is an eigenfunction associated to $\lambda$ in the horoball $B'$ bounded by $H_{-1}$.
Let $$v_0=\frac{\tilde{v}}{v(d+2)} \quad {\rm and} \quad  u_0=\frac{u}{\sup |u|}. $$
We prove that $\tilde{u}:=(u_0 - v_0)^+$, restricted to the right domain, is the function we are looking for.
First, since $\overline{\Omega} \subset B'$, $u_0(x) - v_0(x) < 0$ for $x \in \partial \Omega$ and, therefore, ${\rm supp} \; \tilde{u} \subset \Omega$. Observe that $1 \le dist(x, H_{-1}) \le d+1$ for $x \in A_{0,d}$ and $v$ is increasing in $d$, then $v(1) \le \tilde{v}(x)\le v(d+1)$ in $A_{0,d}$. Therefore, $$\gamma_1 := \frac{v(1)}{v(d+2)} \le v_0(x) \le \gamma_2:= \frac{v(d+1)}{v(d+2)} < 1 \quad {\rm  for} \quad x\in A_{0,d}.$$
Hence, using that $\sup u_0 = 1 > \gamma_2$, it follows that the set $\{ u_0 > v_0\}$ is nonempty and, therefore, ${\rm supp} \; \tilde{u} \ne \emptyset$. Let $\tilde{\Omega}$ be a subdomain of $\Omega$ such that $\tilde{u}=0$ on $\partial \tilde{\Omega}$ and $\tilde{u} > 0$ in $\tilde{\Omega}$. To complete the proof, we show that $\tilde{\Omega} \subset A_{0,d-\delta}$ for $\delta= \frac{1}{2}\min \{ d_0, \frac{\gamma_1}{C_0}\} \le d_1$, that depends only on $n$. Since $\tilde{\Omega}\subset A_{0,d}$, we just need to prove that $dist(x,H_d) > \delta$ for $x \in \tilde{\Omega}$. Suppose that $x_1 \in \tilde{\Omega}$ and $dist(x_1,H_d) \le d_0$. (The case $dist(x_1,H_d) > d_0$ is trivial since $d_0 > \delta$.) Then $u_0(x_1) > v_0(x_1) \ge \gamma_1$. From Lemma \ref{controleDeUPelaDistanciaAFronteira} and $\sup u_0 =1$, we have 
$$ \gamma_1 < u_0(x_1) \le C_0 \; dist (x_1, H_d), $$
concluding that $ dist (x_1, H_d) > \frac{\gamma_1}{C_0}\ge \delta$.
\end{proof}
\begin{theorem}
Let $B$ be a horoball in $\mathbb{H}^n$, $\Omega \subset B$ and $\lambda \in [0,\lambda_1]$.
Then there is no bounded eigenfunction associated to $\lambda$ in $\Omega$. 
\end{theorem}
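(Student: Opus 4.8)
The plan is to argue by contradiction, combining a first confinement step with a descent that repeatedly applies Lemma \ref{solutionInASmallerHoroannulus}. Suppose $u$ is a bounded eigenfunction associated to $\lambda$ in $\Omega\subset B$; replacing $u$ by $-u$ if necessary, assume $u>0$ somewhere, and set $u_0=u/\sup_\Omega|u|$, so that $u_0\le 1$ and $u_0>0$ at some point $x_1$. The obstruction to invoking Lemma \ref{solutionInASmallerHoroannulus} directly is that a horoball has \emph{unbounded} width, whereas that lemma requires $\Omega\subset A_{0,d}$ with $d$ finite. The first task is therefore to trap a positive eigenfunction inside a finite horoannulus.

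For this I use the unbounded usual eigenfunction exactly as in the proof of Lemma \ref{solutionInASmallerHoroannulus}: let $\tilde v(x)=v\big(dist(x,H_{-1})\big)$, with $v$ as in \eqref{vUnboundedEigenfucntionInHoroball}. Then $\tilde v$ is a positive eigenfunction on the horoball $B'$ bounded by $H_{-1}$, which contains $\overline\Omega$, and since $dist(x,H_{-1})=dist(x,H)+1$ for $x\in B$ and $v$ is increasing with $v\to\infty$, the value $\tilde v(x)$ is an increasing function of $dist(x,H)$ tending to $\infty$. Choose a constant $K$ so large that $v_0:=\tilde v/K$ satisfies $v_0(x_1)<u_0(x_1)$. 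The difference solves $-\Delta(u_0-v_0)=\lambda(u_0-v_0)$, equals $-v_0<0$ on $\partial\Omega$ (where $u_0=0$), and the open set $\{u_0>v_0\}$ is nonempty since it contains $x_1$. On that set $v_0<u_0\le 1$, i.e. $\tilde v<K$, which forces $dist(x,H)$ to stay below a finite bound $D$; hence $\{u_0>v_0\}\subset A_{0,D}$. Thus $\tilde u:=(u_0-v_0)^+$, restricted to a subdomain $\Omega_1\subset A_{0,D}$ on which it is positive and vanishes on $\partial\Omega_1\subset\{u_0=v_0\}\subset\Omega$, is a positive bounded eigenfunction associated to $\lambda$ of finite width $D$.

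Next I iterate. If $D\le d_1$, I already have a positive bounded eigenfunction in $A_{0,d_1}$ and pass to the final step; otherwise $D>d_1$ and Lemma \ref{solutionInASmallerHoroannulus} yields a positive bounded eigenfunction in some $\Omega_2\subset A_{0,D-\delta}$. Repeating produces widths $D=D_0>D_1>\cdots$ with $D_{k+1}=D_k-\delta_k$. The crucial observation ensuring termination is that the reduction never degenerates: because the widths decrease, the quantity $\gamma_1=v(1)/v(D_k+2)$ appearing in the proof of Lemma \ref{solutionInASmallerHoroannulus} increases, so $\delta_k=\tfrac12\min\{d_0,\gamma_1/C_0\}\ge\delta_0>0$ is bounded below. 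Hence after at most $\lceil (D-d_1)/\delta_0\rceil$ steps the width drops to some $D_N\le d_1$, giving a positive bounded eigenfunction in $\Omega_N\subset A_{0,d_1}$. Finally, since $d_1=\tfrac12\min\{1/C_0,d_0\}$ satisfies $d_1<d_0$ and $d_1<1/C_0$, one may pick $C\in(C_0,1/d_1]$, so that $\Omega_N\subset A_{0,d_1}$ meets the hypotheses of Theorem \ref{nonexistenceIfWidthIsSmall} with $b=d_1$; that theorem forbids any bounded eigenfunction in $\Omega_N$, a contradiction. Therefore no bounded eigenfunction exists in $\Omega\subset B$.

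I expect the main obstacle to be the first step, the passage from the infinite-width horoball to a finite horoannulus: it requires exploiting the unboundedness of the usual eigenfunction $v$ to confine the positivity set of $u_0-v_0$ to a finite band while keeping that set nonempty, together with the verification that the positive part is a genuine classical eigenfunction on a subdomain with vanishing boundary data. The termination of the descent is a secondary but necessary point, handled by noting that the widths decrease and hence the step sizes $\delta_k$ stay bounded away from zero.
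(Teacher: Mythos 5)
Your proposal is correct and follows essentially the same route as the paper: trap a positive eigenfunction in a finite horoannulus $A_{0,D}$ by subtracting a rescaled copy of the unbounded usual eigenfunction \eqref{vUnboundedEigenfucntionInHoroball} (the paper writes this as $u_1=Cu_0-\bar v$ with $\bar v$ based at $\partial B$, you as $(u_0-\tilde v/K)^+$ with $\tilde v$ based at $H_{-1}$ --- the same device), then apply Lemma \ref{solutionInASmallerHoroannulus} repeatedly until the width drops below $d_1$, contradicting Theorem \ref{nonexistenceIfWidthIsSmall}. Your additional verification that the step sizes $\delta_k$ stay bounded away from zero (via the monotonicity of $\gamma_1=v(1)/v(D_k+2)$ as the widths shrink) is a careful touch: the paper instead treats $\delta$ as a fixed constant depending only on $n$, as asserted in the statement of Lemma \ref{solutionInASmallerHoroannulus}, even though the $\delta$ constructed in that lemma's proof involves $d$, so your monotonicity argument quietly repairs a point the paper glosses over.
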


\begin{proof}
Let us assume that there is a bounded eigenfunction $u_0$ associated to $\lambda$ in $\Omega$. Consider the positive eigenfunction $\bar{v}$ in $B$ associated to $\lambda$ defined by $\bar{v}(x)= v(dist(x, \partial B))$, where $v$ is given by  \eqref{vUnboundedEigenfucntionInHoroball}. Observe that for some constant $C$, the set $\Omega_1 = \{ x \in \Omega \; : \; C u_0(x) > \bar{v}(x)\}$ is not empty.  On the other hand, since $C u_0$ is bounded, say by $M$, and $v(dist(x,\partial B)) \to +\infty$ as $dist(x,\partial B) \to +\infty$, then 
$$\bar{v}(x) = v(dist(x,\partial B)) > M \ge C u_0(x) \quad {\rm if} \quad dist(x,\partial B) \ge \bar{d},$$
where $\bar{d}$ is sufficiently large. 
Hence $\Omega_1 \subset A_{0,\bar{d}}$ and $u_1$ defined in $\overline{\Omega}_1$ by $u_1= Cu_0 -\bar{v}$ is zero on $\partial \Omega_1$. Indeed, $u_1$ is a positive bounded eigenfunction associated to $\lambda$ in $\Omega_1$. If $\bar{d} \le d_1= \frac{1}{2}\min \{ 1/C_0,d_0 \}$, any subdomain of $A_{0,\bar{d}}$ cannot have a bounded eigenfunction associated to $\lambda$ according to Corollary \ref{nonexistenceIfWidthIsSmall}. This contradicts the existence of $u_1$. Hence $\bar{d} > d_1$ and, from Lemma \ref{solutionInASmallerHoroannulus}, there exists a positive bounded eigenfunction $u_2$ associated to $\lambda$ in some $\Omega_2$ contained in $A_{0,\bar{d}- \delta}$.

If $\bar{d}-\delta \le d_1$, we have a contradiction as before. Then $\bar{d}-\delta > d_1$ and we can apply Lemma \ref{solutionInASmallerHoroannulus} to obtain a bounded eigenfunction $u_3$ in some $\Omega_3$ contained in $A_{0,\bar{d}-2 \delta}$.
Indeed, since there exists some $k \in \mathbb{N}$ such that $0 < \bar{d}-k\delta \le d_1$, applying again Lemma \ref{solutionInASmallerHoroannulus} $(k-2)$ times, it follows that there is some positive bounded eigenfunction $u_{k+1}$ in some $\Omega_{k+1}$, subdomain of $A_{0,\bar{d}-k \delta}$.  However this contradicts Corollary \ref{nonexistenceIfWidthIsSmall}, since any subdomain of  $A_{0,\bar{d}-k \delta}$ cannot have a bounded eigenfunction associated to $\lambda \in [0,\lambda_1]$.  
\end{proof}

\noindent {\sl Remark:} In particular, this result holds for the harmonic case, that is, there is no nontrivial bounded harmonic functions defined in subdomains of a horoball, vanishing on the boundary.

\section{Existence results}
\label{generalBigDomains}

In this section we study the existence of nonnegative bounded eigenfunctions that admit a continuous extension to $\partial_\infty \mathbb{H}^n.$
We show that if the asymptotic boundary of a domain contains an open set of $\partial_{\infty}\mathbb{H}^n$, then this domain
admits such an eigenfunction. Moreover, we demonstrate that if such extension is possible for some eigenfunction $u$ associated to $\lambda \in (0,\lambda_1]$, then $\lim_{z \to z_0}u(z)=0$ for any $z_0$ in the asymptotic boundary. 

Recall that a hypersphere is a hypersurface equidistant from a totally geodesic hypersurface of $\Hi^n$ and a hyperball is a connected component of the complement of a hypersphere. Hence a domain contains a hyperball if and only if its asymptotic boundary contains an open set of $\partial_{\infty}\mathbb{H}^n.$

\begin{theorem}
Let $\Omega$ be an open set in $\mathbb{H}^n$ that satisfies the exterior sphere condition, that is, for any $x\in \partial \Omega$ there exists a geodesic ball $B$ such that $\overline{\Omega} \cap \bar{B} =\{ x \}$. Assume also that $\partial_{\infty} \Omega$ contains an open subset of $\partial_{\infty}\mathbb{H}^n.$ Then, for any $\lambda \in (0,\lambda_1]$, $\Omega$ admits a positive bounded eigenfunction of eigenvalue $\lambda$.
\end{theorem}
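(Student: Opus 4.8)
The plan is to construct the eigenfunction via the Perron method, using the barriers already developed in the paper together with the exterior sphere condition to control the boundary behavior on $\partial\Omega$ and the asymptotic boundary behavior at the open set of $\partial_\infty\mathbb{H}^n$. Since $\partial_\infty\Omega$ contains an open set $U\subset\partial_\infty\mathbb{H}^n$, the domain contains a hyperball (by the remark preceding the statement), and the idea is to compare $\Omega$ from inside with a hyperball whose asymptotic boundary is a geodesic ball contained in $U$. On such a hyperball there is a positive radial (with respect to the hypersphere) eigenfunction analogous to the horoball construction: solving the associated ODE coming from \eqref{LaplacianoCompostasUg} with $g=\text{signed distance to the hypersurface}$ produces a positive solution that decays at infinity, giving a positive subsolution supported in the hyperball. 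I would use this to guarantee that the Perron solution is nontrivial and positive.

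First I would set up the Perron family. Let $\mathcal{S}$ be the family of subsolutions $w\in C(\overline\Omega)$ of $-\Delta w=\lambda w$ that are bounded above by the global radial eigenfunction $u$ from Lemma~\ref{autofuncao global} (suitably scaled so that it bounds everything), satisfy $w\le 0$ on $\partial\Omega$, and are dominated near infinity. Define $U(x)=\sup_{w\in\mathcal{S}}w(x)$. The standard Perron argument — harmonic (here, $\lambda$-eigenfunction) lifting on small balls, where solvability of the Dirichlet problem $-\Delta=\lambda\cdot$ on a small ball holds because $\lambda<\lambda_1(B)$ for $B$ small by Corollary~\ref{NoEigenfunctionInBoundedDomains} together with Lemma~\ref{comparison II} — shows $U$ is an eigenfunction of eigenvalue $\lambda$ in $\Omega$. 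Boundedness of $U$ follows from comparison against the scaled global eigenfunction via Lemma~\ref{comparison II} on exhausting bounded domains. Positivity and nontriviality of $U$ come from inserting the hyperball subsolution into $\mathcal{S}$.

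Next I would verify the boundary condition $U=0$ on $\partial\Omega$. This is exactly where the exterior sphere condition enters: for $x_0\in\partial\Omega$ with exterior ball $B$ touching at $x_0$, the positive radial eigenfunction outside $B$ from Proposition~\ref{eigenfunctionOutsideABall}, scaled and translated so it vanishes on $\partial B$ and is large on a sphere separating $x_0$ from infinity, serves as an upper barrier forcing $\limsup_{x\to x_0}U(x)\le 0$; combined with $U\ge 0$ this gives continuity up to $\partial\Omega$ with value $0$. The decay at infinity on the complement of $U$ (the part of $\partial_\infty\Omega$ not in the open set) is handled by the same global eigenfunction $u$, which tends to $0$ by Lemma~\ref{convergesToZeroAtInfinity}, pinching $U$ between $0$ and a decaying function there.

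The main obstacle I anticipate is constructing the positive eigenfunction on the hyperball and confirming it decays and gives a genuine nontrivial lower bound for $U$ — that is, producing the explicit barrier attached to the open asymptotic boundary set and checking it lies in $\mathcal{S}$. The ODE for the hyperball uses the Laplacian of the distance to an equidistant hypersurface (which involves $\tanh$ rather than $\coth$), so one must solve $w''+(n-1)\tanh(t)\,w'+\lambda w=0$ on the relevant half-line and exhibit a positive decaying solution vanishing on the hypersphere; showing such a solution exists, stays positive in the hyperball, and decays at infinity is the delicate analytic step. Everything else — the Perron lifting, the comparison estimates, and the exterior-sphere upper barrier — is routine given Lemmas~\ref{comparison}, \ref{comparison II} and Proposition~\ref{eigenfunctionOutsideABall}.
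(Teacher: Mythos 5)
Your Perron framework itself is viable: lifting on small balls is justified because $\lambda \le \lambda_1 < \lambda_1(B')$ for every bounded $B'$ (Corollary \ref{NoEigenfunctionInBoundedDomains}), the scaled global eigenfunction of Lemma \ref{autofuncao global} is a positive supersolution dominating the family, and your exterior-sphere upper barrier from Proposition \ref{eigenfunctionOutsideABall} is exactly the device the paper uses for boundary continuity (the paper instead solves $(P_N)$ on $\Omega_N=\Omega\cap B_N(x_0)$ by the Fredholm alternative and extracts a locally uniform limit, but that difference is cosmetic). The genuine gap is the step you flagged as ``delicate'': the positive decaying solution of $w''+(n-1)\tanh(t)\,w'+\lambda w=0$ with $w=0$ on the hypersphere does \emph{not} exist in general — the step is not merely unproven, it is false for $n\ge 6$ when $\lambda$ is close to $\lambda_1$ and $\partial H$ is totally geodesic (the normalization you adopt). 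Substituting $w=\cosh^{-(n-1)/2}(t)\,h$ turns the ODE into $-h''+Vh=\lambda h$ with
$$V(t)=\frac{(n-1)^2}{4}-\left(\frac{(n-1)^2}{4}-\frac{n-1}{2}\right)\mathrm{sech}^2\, t,$$
a P\"oschl--Teller well, and for $n\ge 6$ the half-line Dirichlet problem at $t=0$ has an eigenvalue $2n-6<\lambda_1$ below the essential spectrum $[\lambda_1,\infty)$. Concretely, for $n=7$ the function $w(t)=\sinh t\,\cosh^{-5}t$ satisfies $w''+6\tanh(t)\,w'+8w=0$, $w(0)=0$, $w>0$ on $(0,\infty)$, and is square-integrable against $\cosh^{6}t\,dt$, while $8<\lambda_1=9$. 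A Wronskian/Sturm comparison against this ground state then forces every solution with $w(0)=0$ and $\lambda\in(2n-6,\lambda_1]$ to vanish again at some finite $T$ and change sign. So exactly in the regime $\lambda=\lambda_1$ and $n$ large, your nontriviality subsolution does not exist and the Perron supremum could collapse to the zero function. (For $2\le n\le 5$ there is no such bound state and your radial ansatz does work.)

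This is precisely the obstruction the paper's proof is engineered to sidestep: rather than a one-dimensional ansatz in the signed distance, it takes the two global radial eigenfunctions $v_1,v_2$ of Lemma \ref{autofuncao global} centered at points $p_1\in H$, $p_2\notin H$ reflected across the totally geodesic $\partial H$, and uses $v_0=(v_1-v_2)^+$: by the monotonicity of the radial profile (Lemma \ref{exiteautofuncao}) this is a continuous nonnegative subsolution, positive in $H$, vanishing outside, and decaying at infinity — valid for all $n\ge 2$ and all $\lambda\in(0,\lambda_1]$, with no ODE positivity analysis at all. Your proof can be repaired by inserting this $v_0$ into your Perron family in place of the hyperball barrier (alternatively, one can try a hypersphere far enough on the convex side, where the residual $\mathrm{sech}^2$ well is too weak to bind, but proving non-oscillation there is additional spectral work). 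With that substitution, the rest of your outline — lifting, the comparison via Lemma \ref{comparison II} on bounded pieces, the exterior-sphere barrier, and the pinching $0\le U\le v_1\to 0$ at infinity — goes through and matches the paper's argument step for step.
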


\begin{proof}
Since $\partial_{\infty} \Omega$ contains some open subset of $\partial_\infty \Hi^n,$ there is a hyperball $H$ in $\Omega$.
We may assume that $\partial H$ is totally geodesic.

Let $p_1$ and $p_2$ be two points that are equidistant from $\partial H,$ such that $p_1\in H$, $p_2 \notin H$, and the geodesic that connects these two points intercepts $\partial H$ orthogonally. From \cite{GO}, there exists a global radially symmetric eigenfunction $v_i$ associated to $\lambda$, centered at $p_i$, $i=1,2$. We can suppose that $v_i(p_i)=1.$ Define $$v_0=\left\lbrace \begin{array}{ll}
                                        v_1-v_2 & \text{ in }H\\[5pt]
                                        0 & \text{ in }H^C.
                                        \end{array}\right.$$
Since $v_1=v_2$ on $\partial H,$ $v_0$ is a continuous function and it follows from $v_i$ being a decreasing function of the distance to $p_i$ that $v_0\geq 0.$

Let $x_0$ be a point in $\partial \Omega$ and consider the problem 
$$(P_R) \quad \quad \quad  \left\lbrace \begin{array}{cl}
-\lap u= \lambda u & \text{ in }\Omega_R=\Omega \cap B_R(x_0)\\[5pt]
u=v_0 & \text{ on }\partial \Omega_R \\
\end{array}\right.$$
for $R$ large enough such that $H\cap \Omega_R\neq \emptyset$. As an application of the Fredholm alternative (see Theorem 8.6 in \cite{GT}), this problem has a solution, which is unique, since Corollary \ref{NoEigenfunctionInBoundedDomains} implies that $\lambda$ is not in the spectrum of $-\Delta$ for bounded domains. 
Let $u_N$ be the solution of $(P_N),$ $N\in\mathbb{N}.$ Since $u_N\geq v_0$ on $\partial \Omega_N,$ by the comparison principle (Lemma \ref{comparison II}), $u_N\geq v_0$ in $\Omega_N.$ On the other hand $u_N=v_0 \leq v_1$ on $\partial \Omega_N,$ since $v_2\ge 0$. Hence $u_N \le v_1$ in $\Omega_N.$ We conclude
\begin{equation}\label{desigualdades}
0\leq v_0\leq u_N \leq v_1 \text{ in } \Omega_N.
\end{equation}

Therefore $u_N$ is a bounded sequence and, using that $-\lap u_N= \lambda u_N$, it follows from classical estimates that the derivative of $u_N$ is uniformly bounded in $\Omega_{N_0}$ for $N\ge N_0$. Hence for any $x \in \Omega$,  $\{u_N\}_{N \ge N_0}$ is an equicontinuous family at $x$, where $N_0$ is large enough. Then there exists a subsequence converging uniformly on compact subsets of $\Omega$ to a solution $u$ of
$$\left\lbrace \begin{array}{l}
-\lap u= \lambda u \text{ in }\Omega\\[4pt]
u\geq 0 \text{ is bounded.}
\end{array}\right.$$

We prove now that $u$ is continuous and vanishes on $\partial \Omega$. Observe that for any $x_1 \in \partial \Omega$, there exist a ball $B=B_{r_1}(y)$ such that $\overline{\Omega} \cap \overline{B}=\{x_1\}$ and, from Proposition \ref{eigenfunctionOutsideABall}, a positive radially symmetric eigenfunction $w$ associated to $\lambda$ in $\mathbb{H}^n \backslash B$. The set of maximum points of $w$ is a sphere $\partial B_{r_2}(y)$, where $r_2 > r_1$, and we can assume that $\max w = \max v_1$.
For $N$ large, $u_N$ is defined in $\Omega \cap B_{r_2}(y)$, since $\Omega_N = \Omega \cap B_N(x_0) \supset \Omega \cap B_{r_2}(y)$. Moreover, $w \ge u_N$ on $\partial (\Omega \cap B_{r_2}(y) )= \left( \partial \Omega \cap \overline{B_{r_2}(y)}\right)  \cup \left( \Omega \cap \partial B_{r_2}(y) \right)$, since $w\ge 0 = v_0=u_N$ on $\partial \Omega \subset \mathbb{H}^n \backslash B_{r_1}(y)$ and $ w=\max w \ge \max v_1 \ge v_1 > v_0=u_N$ on $\partial B_{r_2}(y)$.  
Therefore, $w \ge u_N$ in $\Omega \cap B_{r_2}(y)$, otherwise $u_N - w$ is a positive eigenfunction in some open subset $\Omega' \subset \Omega \cap B_{r_2}(y)$ that vanishes on $\partial \Omega'$, contradicting Corollary \ref{NoEigenfunctionInBoundedDomains}. Hence $0 \le u \le w$ in $\Omega \cap B_{r_2}(y)$, $w(x_1)=0$ and $u(x_1)=\lim u_N(x_1) = 0$. From the continuity of $w$, we have that $u$ is continuous and equals to zero at $x_1$ proving the statement. 

Finally, from \eqref{desigualdades} and $\lim_{x\to \infty}v_1(x)=0$, $u$ extends continuously to the asymptotic boundary of the domain, where it is equal to zero.
\end{proof}

\begin{proposition}
If $u$ is an eigenfunction associated to $\lambda \in (0,\lambda_1]$ in $\Omega \subset \mathbb{H}^n$
that can be extended continuously at $\partial_{\infty} \Omega$, then $\lim_{x\to \infty} u(x) = 0$.
\end{proposition}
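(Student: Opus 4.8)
The plan is to argue by contradiction and to reduce the statement to showing that the continuous extension $\tilde u$ of $u$ to $\overline{\Omega}$ (the closure of $\Omega$ in $\overline{\Hi^n}$) vanishes identically on $\partial_\infty\Omega$. First observe that, since $\overline{\Hi^n}$ is compact in the cone topology, $\overline{\Omega}$ is compact and hence $\tilde u$ is bounded; set $M=\sup_\Omega|u|<\infty$. If the conclusion failed there would be a sequence $x_k\in\Omega$ with $d(x_k,o)\to\infty$ and $|u(x_k)|\ge\epsilon_0>0$; passing to a subsequence, $x_k\to z_0\in\partial_\infty\Hi^n$, and since $x_k\in\Omega$ we get $z_0\in\partial_\infty\Omega$ with $|\tilde u(z_0)|\ge\epsilon_0$. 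Replacing $u$ by $-u$ if necessary (which is again an eigenfunction admitting a continuous extension), we may assume $\tilde u(z_0)=a>0$.

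Next I would exploit the continuity of $\tilde u$ at $z_0$: choose a connected cone neighborhood $V$ of $z_0$ with $\tilde u>a/2$ on $V\cap\overline{\Omega}$. Two geometric facts drive the argument. Since $u=0$ on $\partial\Omega$, no point of $\partial\Omega$ can lie in $V$, so $V\cap\Hi^n$ is disjoint from $\partial\Omega$; being connected and meeting $\Omega$ (because $z_0\in\partial_\infty\Omega$), it is contained in $\Omega$, whence $u>a/2$ on all of $V\cap\Hi^n$. Second, writing $\gamma$ for the geodesic ray from $o$ to $z_0$, the closed balls $\overline{B_{\rho_t}(\gamma(t))}$ lie in $V$ for a suitable radius $\rho_t\to\infty$ as $t\to\infty$: a ball of radius $\rho$ centered at distance $t$ from $o$ subtends from $o$ a half-angle $\alpha$ with $\sin\alpha=\sinh\rho/\sinh t$, so taking $\rho_t$ just below $\mathrm{arcsinh}(\sin\theta\,\sinh t)\approx t+\log\sin\theta$ keeps the ball inside the cone of angle $\theta$ while $\rho_t\to\infty$. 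Thus $\Omega$ contains balls of arbitrarily large radius on which $u>a/2$.

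The concluding step, where $\lambda>0$ is essential, is a comparison on one such ball $B_{\rho_t}:=B_{\rho_t}(\gamma(t))$. There $-\lap u=\lambda u>\lambda a/2=:c_0>0$, so $u$ dominates the torsion-type barrier $\psi_t$ solving
\[
-\lap\psi_t=c_0 \text{ in } B_{\rho_t}, \qquad \psi_t=0 \text{ on } \partial B_{\rho_t}.
\]
Indeed $-\lap(u-\psi_t)=\lambda u-c_0>0$ and $u-\psi_t>0$ on $\partial B_{\rho_t}$, so by the minimum principle $u\ge\psi_t$ in $B_{\rho_t}$; in particular $u(\gamma(t))\ge\psi_t(\gamma(t))=\max\psi_t$. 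Solving the radial equation $(\sinh^{n-1}r\,\psi_t')'=-c_0\sinh^{n-1}r$ yields $\psi_t(\gamma(t))=c_0\int_0^{\rho_t}\sinh^{1-n}(r)\big(\int_0^r\sinh^{n-1}(\tau)\,d\tau\big)\,dr$, whose integrand tends to $1/(n-1)$, so $\max\psi_t\to\infty$ as $\rho_t\to\infty$. This contradicts $u\le M$ once $t$ is large, proving $\tilde u\equiv 0$ on $\partial_\infty\Omega$ and hence $\lim_{x\to\infty}u(x)=0$. I expect the main obstacle to be the geometric bookkeeping that places the large balls inside $V\cap\Hi^n$ (and that $V$ avoids $\partial\Omega$), together with the elementary but crucial estimate that the maximum of the hyperbolic torsion function grows without bound with the radius; both effects rely on $\lambda>0$, which is precisely why the analogous harmonic statement fails.
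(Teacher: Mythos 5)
Your proof is correct, but it reaches the contradiction by a genuinely different mechanism than the paper. The paper splits into cases according to whether $dist(x_k,\partial\Omega)\to 0$ along the sequence: in that case it concludes directly from $u=0$ on $\partial\Omega$ and the existence of the limit; otherwise it works with balls of a \emph{fixed} radius $r_0$ on which the continuous extension forces $L-\varepsilon<u<L+\varepsilon$, and compares with the quadratic barrier $P_k(x)=(L-\varepsilon)+C\bigl(r_0^2-dist(x,x_k)^2\bigr)$ (with $C$ small, independent of $k$ and $\varepsilon$), obtaining $u(x_k)\ge L-\varepsilon+Cr_0^2$ and contradicting the upper bound once $\varepsilon<Cr_0^2/2$. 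You instead observe that a connected cone neighborhood $V$ of $z_0$ on which $\tilde u>a/2$ must avoid $\partial\Omega$ (since $u=0$ there), hence $V\cap\Hi^n\subset\Omega$ --- this neatly subsumes the paper's first case --- and then exploit that truncated cones contain geodesic balls of unbounded in-radius, comparing on those with the hyperbolic torsion function, whose maximum grows like $c_0\rho_t/(n-1)\to\infty$; this contradicts the boundedness of $u$, which you correctly derive from compactness of $\overline{\Omega}$ in $\overline{\Hi^n}$. Both routes hinge on $\lambda>0$ and a one-sided comparison with an explicit barrier, but they buy different things: the paper's argument is more local and quantitative, needing neither global boundedness of $u$ nor any in-radius growth (only that the fixed balls $B_{r_0}(x_k)$ eventually lie in a cone neighborhood), whereas yours establishes a reusable stronger fact --- a bounded solution of $-\lap u=\lambda u$ with $\lambda>0$ cannot stay above a positive constant on balls of radii tending to infinity. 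One small repair to your bookkeeping: cone-topology neighborhoods are \emph{truncated} cones, so besides $\sinh\rho_t<\sin\theta\,\sinh t$ you must also ensure the ball stays beyond the truncation radius $R$, i.e.\ $t-\rho_t>R$; since $t-\mathrm{arcsinh}(\sin\theta\sinh t)\to-\log\sin\theta$ is merely a constant, take for instance $\rho_t=\min\{t-R-1,\ \mathrm{arcsinh}(\sin\theta\sinh t)-1\}\to\infty$, which fixes this without affecting the rest of the argument.
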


\begin{proof}
Given $z_0 \in \partial_{\infty} \Omega$, let $(x_k)$ be a sequence in $\mathbb{H}^n$ such that $x_k \to z_0$. 
If $dist(x_k, \partial \Omega) \to 0$, then, using that $u=0$ on $\partial \Omega$, there exist a sequence of points $y_k \in \Omega$, close to the boundary, such that $u(y_k) \to 0$ and $dist(y_k,x_k) \to 0$. Hence, the existence of the limit implies that $\lim_{k\to \infty} u(x_k) =0$.

Therefore, if $\lim_{k\to \infty} u(x_k) =L \ne 0$, it follows that $dist(x_k, \partial \Omega) \not\to 0$. Thus, for some subsequence, say $(x_k)$, holds $dist(x_k,\partial \Omega) \ge r_0$, for some $r_0 >0$. Suppose, without loss of generality, that $L >0$. From the cone topology, given $\varepsilon > 0$ ($\varepsilon < L$), there is $k_0 \in \mathbb{N}$ such that 
$$0 < L-\varepsilon < u(x) < L + \varepsilon \quad {\rm  for} \quad x\in \overline{B_{r_0}(x_k)}$$ if $k \ge k_0$.
Hence $-\Delta u = \lambda u \ge \lambda (L - \varepsilon)$ in $B_{r_0}(x_k)$. For $k\ge k_0$, define 
$$ P_k(x)= (L-\varepsilon) + C \left(r_0^2 - (dist(x,x_k))^2\right),$$
where $C > 0$ is some suitable constant. Note that $P_k$ is radially symmetric with respect to $x_k$ and, expressing its Laplacian in radial coordinates with $r=dist(x,x_k)$, we have $$-\Delta P_k(r) = -\frac{d^2 P_k}{d\, r^2} - (n-1) \coth r \frac{dP_k}{d\, r} \le C[2 + (n-1) (\coth r_0) 2r_0 ],$$
for $r \le r_0$, since $[2 + (n-1) (\coth r) 2r ]$ is increasing. Then, for $C$ small not depending on $k$ and $\varepsilon$, we have $-\Delta P_k \le \lambda (L - \varepsilon)$.
Therefore, $$-\Delta u \ge -\Delta P_k  \quad {\rm in } \quad B_{r_0}(x_k) \quad {\rm and} \quad  u > L-\varepsilon \ge P_k \quad {\rm on } \quad \partial B_{r_0}(x_k).$$ Hence, from Lemma \ref{comparison II}, 
$u > P_k$ in $B_{r_0}(x_k)$ and, thus, $u(x_k) > P_k(x_k)= L-\varepsilon + Cr_0^2$. This contradicts $u (x_k) < L + \varepsilon$ for $\varepsilon < Cr_0^2/2$,
concluding the proof. 
\end{proof}

\noindent There exist positive bounded eigenfunctions that cannot be extended continuously at $\partial_{\infty} \Omega$. This is exemplified in the next proposition.

\begin{proposition}
Let $B$ be a horoball in $\mathbb{H}^n$, with boundary $H$.
Then, problem \eqref{eqProblemaPrincipal} with $U=\mathbb{H}^n\backslash B$
has a positive bounded solution that depends only on $d=dist(x,H)$. 
This solution extends continuously to zero at $\partial_{\infty}\mathbb{H}^n\backslash \partial_{\infty} H$ and 
it cannot be extended to $\partial_{\infty} H\cap\partial_{\infty}\mathbb{H}^n.$
\end{proposition}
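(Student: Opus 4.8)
The plan is to first reduce the problem to a linear ordinary differential equation and then analyze the boundary behaviour using the geometry of horospheres in the cone topology. I will assume $\lambda \in (0,\lambda_1]$, the range in which a decaying solution exists. Writing $u(x)=\phi(d(x))$ with $d(x)=dist(x,H)$, relation \eqref{EDO horo exterior} turns $-\lap u=\lambda u$ in $U=\mathbb{H}^n\backslash B$ into the constant-coefficient equation $\phi''+(n-1)\phi'+\lambda\phi=0$, together with the boundary condition $\phi(0)=0$ coming from $u=0$ on $H=\partial U$. The characteristic roots are $m_\pm=\tfrac{1}{2}\big(-(n-1)\pm\sqrt{(n-1)^2-4\lambda}\,\big)$, which are both negative for $\lambda\in(0,\lambda_1)$ and collapse to $m=-(n-1)/2$ when $\lambda=\lambda_1$. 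The solution vanishing at $d=0$ is then (a multiple of) $\phi(d)=e^{m_+d}-e^{m_-d}$ if $\lambda<\lambda_1$ and $\phi(d)=d\,e^{-(n-1)d/2}$ if $\lambda=\lambda_1$. In either case $\phi(d)>0$ for $d>0$ and $\phi(d)\to 0$ as $d\to\infty$, so $u=\phi(d(\cdot))$ is a positive bounded solution depending only on $d$, which proves the first assertion.

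For the boundary behaviour I would work in the upper half-space model, normalizing so that $\xi:=\partial_\infty H=\partial_\infty B$ is the point at infinity and $H=\{y=c\}$; note that both the horoball and the horosphere have the single asymptotic point $\xi$. In this model $d(x,H)=|\log(y/c)|$ and $u$ is an explicit function of the height $y$ alone, with $u\to 0$ precisely as $y\to 0$ (inside $U$, where $y\le c$). The key geometric input is to relate convergence to a boundary point with the growth of $d(\cdot,H)$: I claim that a sequence $x_k\in\mathbb{H}^n$ with $d(x_k,H)$ bounded can converge, in the cone topology, only to $\xi$. Indeed such a sequence stays in a horospherical slab $\{-T\le d\le 0\}$ between two horospheres based at $\xi$, and in the half-space model this slab is $\{ce^{-T}\le y\le c\}$, whose closure in $\overline{\mathbb{H}^n}$ meets $\partial_\infty\mathbb{H}^n$ only at $\xi=\infty$: points of the slab are bounded away from the finite boundary and escape to infinity only by letting $|x|\to\infty$, which converges to $\xi$.

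Granting this claim, continuity of the extension at every $\eta\in\partial_\infty\mathbb{H}^n\backslash\{\xi\}$ follows. Any sequence in $\overline U$ converging to such $\eta$ either consists of asymptotic boundary points, where the extension is set to $0$, or is interior and then, by the contrapositive of the claim, must satisfy $d(x_k,H)\to\infty$, whence $u(x_k)=\phi(d(x_k,H))\to 0$. Thus $u$ extends continuously by $0$ on $\partial_\infty\mathbb{H}^n\backslash\partial_\infty H$. For the non-extendability at $\xi$, I would exhibit two sequences tending to $\xi$ with different limits of $u$: the points $w_k=(k,ce^{-t_0})$ lie on the fixed horosphere $H_{-t_0}$, so $u(w_k)=\phi(t_0)>0$, while the points $z_k=(k,1/k)$ satisfy $d(z_k,H)=\log(ck)\to\infty$, so $u(z_k)\to 0$; both $w_k$ and $z_k$ tend to $\xi$ in the cone topology because their direction, seen from a fixed base point, tends to the vertical. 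Since the two limits differ, no continuous extension of $u$ to $\xi$ exists.

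The main obstacle is the geometric/topological step in the second paragraph: identifying convergence to the boundary with the growth of $d(\cdot,H)$ and pinning down the asymptotic boundary of a horospherical slab as the single point $\xi$. I would settle this cleanly in the half-space model by computing, for the base point $o=(0,1)$, the initial direction of the geodesic from $o$ to $(x,y)$ and checking that it tends to the vertical direction (the direction of $\xi$) exactly when $|x|\to\infty$; this simultaneously shows that horosphere-sliding converges to $\xi$ and that no sequence trapped in a slab can converge to a finite boundary point.
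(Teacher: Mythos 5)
Your proposal is correct and follows essentially the same route as the paper: the same reduction via \eqref{EDO horo exterior} to the constant-coefficient ODE with the same explicit solutions \eqref{uBoundedEigenfunctionOutsideHoroball}, then boundary analysis resting on the facts that the level sets of $u$ are the horospheres parallel to $H$ and that a horoball (hence a horospherical slab) has the single asymptotic point $\xi=\partial_\infty H$. Your upper half-space computations and the two explicit sequences at $\xi$ merely make concrete what the paper asserts directly about the cone topology (that $\overline{\mathbb{H}^n}\backslash B_{d_0}$ is a neighborhood of any $p\neq\xi$, and that every neighborhood of $\xi$ meets every level horosphere), so the substance is the same.
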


\begin{proof}
According to \eqref{EDO horo exterior}, $\bar{u}$ is an eigenfunction associated to $\lambda \in (0,\lambda_1]$ on $\mathbb{H}^n\backslash B$ of the form $\bar{u}(x)=u(d(x))$ if and only if 
$$u''(d)+(n-1)u'(d)+\lambda u =0 \quad {\rm and} \quad u(0)=0,$$ which has the following solution
\begin{equation} u(d)= \left\{ \begin{array}{ll} C\,d \,e^{-\frac{(n-1)d}{2}} & \quad {\rm if} \quad \lambda = \lambda_1  \\[5pt] Ce^{-\frac{(n-1)d}{2}}\left( e^{\frac{ \sqrt{(n-1)^2 -4\lambda}}{2}d} - e^{-\frac{ \sqrt{(n-1)^2 -4\lambda}}{2}d} \right) & \quad {\rm if} \quad \lambda < \lambda_1 \end{array} \right.
\label{uBoundedEigenfunctionOutsideHoroball} \end{equation} where $C$ is any real constant. Then, for any $C>0$, $\bar{u}$ is positive and bounded in $\mathbb{H}^n\backslash B$.  To see that it extends continuously to $\partial_{\infty}\mathbb{H}^n\backslash \partial_{\infty} H,$ take a point $p \in \partial_{\infty}\mathbb{H}^n\backslash \partial_{\infty} H.$
Given $\epsilon >0,$ there is $d_0$ large enough such that $d\ge d_0$ implies
$u(d)<\epsilon.$ Consider $H_{d_0}$ the horosphere parallel to $H$ of distance $d_0$ from $H$ and call $B_{d_0}$ the closed horoball bounded by $H_{d_0}$.
The set $\overline{\mathbb{H}^n}\backslash B_{d_0}$ contains an open set around $p$ and is contained in $\{u<\epsilon\}.$
On the other hand, if $p\in \partial_{\infty} H\cap\partial_{\infty}\mathbb{H}^n,$ any open set containing $p$ intercepts all horospheres parallel to $H,$ which are the level sets of $u.$ Hence there is no continuous extension of $u$ at $p.$
\end{proof}

\

\

Departamento de Matem\'atica Pura e Aplicada

Universidade Federal do Rio Grande do Sul

Av. Bento Gon\c{c}alves 9500 - Pr\'edio 43111 

91509-900 Porto Alegre - RS - BRASIL

bonorino@mat.ufrgs.br

\

Departamento de Matem\'atica Pura e Aplicada

Universidade Federal do Rio Grande do Sul

Av. Bento Gon\c{c}alves 9500 - Pr\'edio 43111 

91509-900 Porto Alegre - RS - BRASIL

patricia.klaser@ufrgs.br

\end{document}